\newtheorem{theorem}{Theorem}[section]
\newtheorem{assumptions}[theorem]{Assumption}
\newtheorem{proposition}[theorem]{Proposition}
\newtheorem{lemma}[theorem]{Lemma}
\newtheorem{corollary}[theorem]{Corollary}
\newtheorem{remark}[theorem]{Remark}
\theoremstyle{definition}
\newtheorem{example}[theorem]{Example}
\def\hC{{\widehat C}}
\def\bE {{\bf E}}
\def\mF{{\mathcal F}}
\def\bP{{\mathbbm{P}}}
\def\bE{{\mathbbm{E}}}
\newcommand{\real}{\mathbb{R}}
\title{Conformal prediction with localization}
\author{%
 Leying Guan\\
  Department of Biostatistics \\
  Yale University\\
  New Haven, CT 06510 \\
  \texttt{leying.guan@yale.edu} \\
}
\begin{document}

\maketitle

\begin{abstract}
  We propose a new method called  localized conformal prediction, where we can perform conformal inference using only a local region around a new test sample to construct its confidence interval.  Localized conformal inference is a natural extension to conformal inference. It generalizes the method of conformal prediction to the case where we can break the data exchangeability, so as to give the test sample a special role.  To our knowledge, this is the first work that introduces such a localization to  the framework of conformal prediction. We prove that our proposal can also have assumption-free and finite sample coverage guarantees, and we compare the behaviors of localized conformal prediction and conformal prediction  in simulations. 
\end{abstract}

\section{Introduction}

Let $Z_i\coloneqq (X_i, Y_i)\in \real^p\times \real $ for $i=1,\ldots, n$ be i.i.d regression data from some distribution $\mathcal{P}$. Let $Z_{n+1} = (X_{n+1}, Y_{n+1})$ be a new test sample with its response $Y_{n+1}$ unobserved. Given a nominal coverage level $\alpha$, we are interested in constructing confidence intervals (CI) $\hC(x)$, indexed by $x\in \real^p$, such that 
\begin{equation}
\label{eq:goal}
P(Y_{n+1}\in \hat C(X_{n+1})) \geq \alpha,\;\;\forall \mathcal{P}.
\end{equation}
The conformal inference is a framework for constructing $\hat C(x)$ satisfying eq.\ (\ref{eq:goal}), assuming only that $Z_{n+1}$ also comes from  $\mathcal{P}$ \cite{vovk2005algorithmic, shafer2008tutorial, vovk2009line,lei2014distribution, lei2018distribution}.

Conformal inference constructs CI based on a score function $V:\real^{p}\times \real \rightarrow [0, \infty)$. The score function measures how unlikely a sample is from distribution $\mathcal{P}$, and is constructed in a way such that $V_i = V(Z_i)$ are exchangeable with each other for $i = 1,\ldots, n+1$.  By exchangeability,  we know \cite{vovk2005algorithmic}
\begin{equation}
\label{eq:eq1}
\bP\left\{V_{n+1} \leq Q(\alpha; V_{1:n}\cup \{\infty\})\right\}\geq \alpha, \;\forall \mathcal{P}.
\end{equation}
where $Q(\alpha; V_{1:n}\cup \{\infty\})$ is the level $\alpha$ quantile of the empirical distribution of $\{V_1,\ldots, V_{n}, \infty\}$. Although the construction of $V$ can also be data-dependent, for illustration purposes, let's first consider a data-independent $V(.)$, and let $V(x, y) = |y - \mu(x)|$ where $\mu(x)$ is a fixed prediction function for the response $y\in \real$ at $x\in \real^p$.  To decide whether any value $y$ is  included in $\hC(X_{n+1})$, conformal inference tests the null hypothesis that $Y_{n+1} = y$  based on  eq.\ (\ref{eq:eq1}), and includes $y$ in $\hC(X_{n+1})$ if $V(z_{n+1}) \leq Q(\alpha; V_{1:n}\cup \infty)$, where $z_{n+1} = (X_{n+1}, y)$.

While it is good to have an almost assumption-free CI, the conformal CI treats all training samples equally regardless their distance to $X_{n+1}$. However, in some cases, we may want to emphasize more a local region around $X_{n+1}$. Such a localized approach is especially desirable when the distribution of $V(Z_{n+1})$ is heterogeneous across different values for $X_{n+1}$.   Consider the  example  $Y_i =X_i+\epsilon_i$ with $\epsilon_i |X_i \sim \frac{|X_i|}{|X_i|+1}N(0,1)$, and $X_i\sim Unif(-2,2)$ for $i = 1,2,\ldots, n+1$. We construct the CI for $Y_{n+1}$ by applying conformal inference to the score function $V(x, y) = |x-y|$. Figure \ref{fig:illustration1} shows the conformal confidence band using 1000 training samples (blue curves) and the underlying true confidence band (black curves) at level $\alpha = .95$.  The conformal confidence band can not capture the heterogeneity in a given score function $V(.)$ because it has treated all training samples equally for all test sample observations.
\begin{figure}
\vskip -.2in
\caption{\em Conformal bands (blue), localized conformal bands (red) and underlying true confidence bands (black) at level $\alpha = .95$. The conformal bands cannot capture the heterogeneity in the distribution of $V(X_{n+1}, Y_{n+1})$ for different $X_{n+1}$. The grey dots represent the actual test observations. }
\label{fig:illustration1}
\begin{center}
\includegraphics[width = .5\textwidth, height = .35\textwidth]{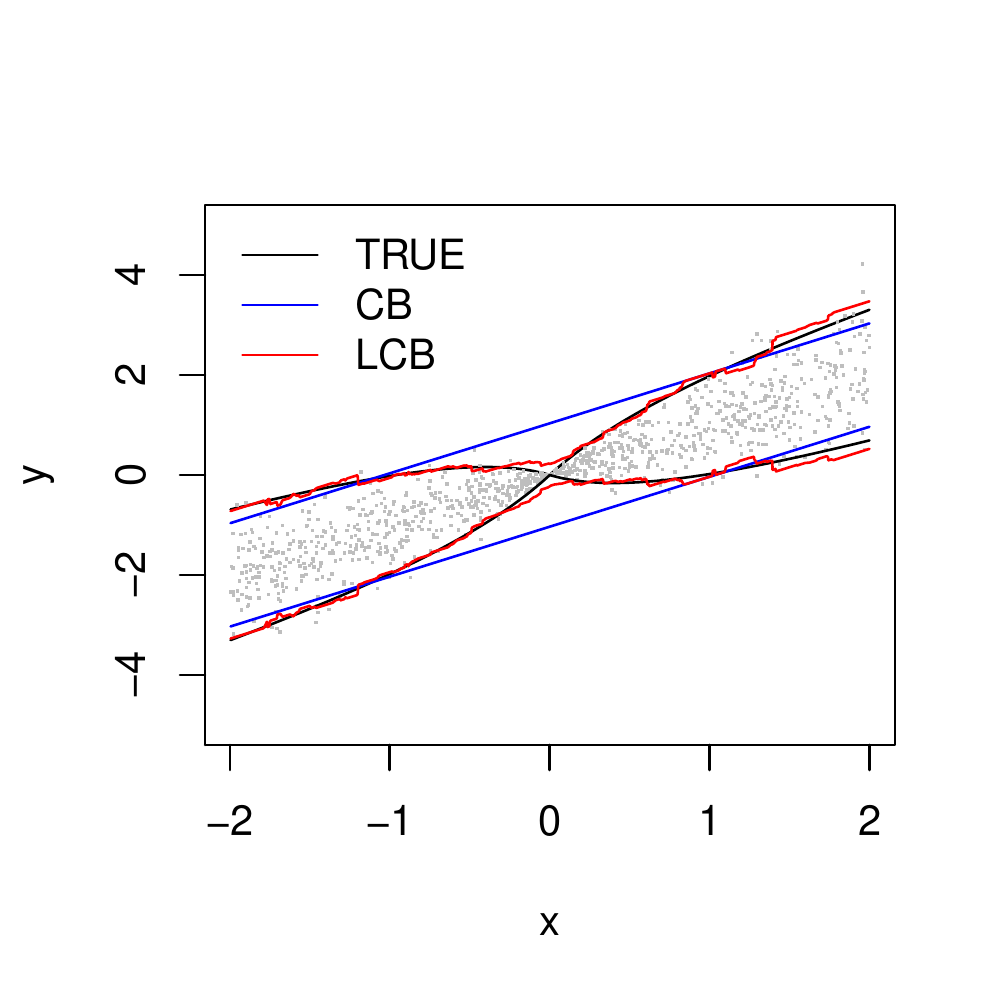} 
\end{center}
\vskip -.2in
\end{figure}
In this paper, we propose a novel approach to build CI using localized conformal inference, which allows for decision rules that may depend on $X_{n+1}$. The main idea is to introduce a localizer around $X_{n+1}$, and up-weight samples close to $X_{n+1}$ according to the localizer. For example, consider a localizer 
\[
H(X_i) =\left\{ \begin{array}{cl} 1& \mbox{if $X_i$ is  among the 100 nearest neighbors of }X_{n+1}\\
0 & \mbox{otherwise}
\end{array}\right..
\]
We  include the response value $y$  in $\hC(X_{n+1})$ if and only if $V(z_{n+1})$ is smaller than the $\tilde\alpha$ quantile of a weighted empirical distribution, where we assign weight $\frac{H(X_i)}{\sum^{n+1}_{j=1} H(X_j)}$ to $V_i$ for $i =1,\ldots, n$ and weight  $\frac{H(X_{n+1})}{\sum^{n+1}_{j=1} H(X_j)}$ to $\infty$. We  show that we can choose $\tilde\alpha$ strategically such that we have finite sample coverage   as described in eq.\ (\ref{eq:goal}).   In Figure \ref{fig:illustration1}, the red curve is the confidence band using the localized conformal inference with the nearest neighbor localizer $H$ that we have just described.  We can see that it does capture the heterogeneity of the underlying truth much better than the conformal confidence band. Performing conformal inference while emphasizing the special role of $X_{n+1}$ is an interesting problem, and to our knowledge, this is the first method providing a theoretical guarantee.

The paper is organized as follows. In Section \ref{sec:related}, we give a brief summary of some related work in conformal prediction with local coverage and weighted conformal prediction. In Section \ref{sec:method}, we  introduce the idea of localized conformal prediction, focusing on the case where we have a fixed score function with i.i.d generated training and test samples. We provide simulation results comparing  localized conformal inference and the conformal inference in Section \ref{sec:sim1}.  In Section \ref{sec:extension}, we give details about how to apply the idea of localized conformal prediction with data-dependent score functions and relate localized conformal inference to the notation of local coverage and asymptotic conditional coverage. Proofs of all Theorems are given in the Supplement 
\section{Related work}
\label{sec:related}
One perspective for capturing the local structure of $V(Z_{n+1})$ at different $X_{n+1}$ is to consider the  conditional coverage validity \cite{lei2014distribution,vovk2012conditional}:
\begin{equation}
\label{eq:CC2}
\bP\{Y_{n+1}\in \hat C(x_0)|X_{n+1} = x_0\} \geq \alpha\;\;\;\;\;\;\;\;\mbox{for all $\mathcal{P}$}.
\end{equation}
However, let $N(\mathcal{P})$ denote a set of non-atom points for $\mathcal{P}$, it is impossible to achieve the finite sample conditional validity without letting $\hat C(x)$ have infinite length for all $x\in N(\mathcal{P})$ \cite{lei2014distribution,vovk2012conditional,barber2019limits}.

Different methods have then been proposed to construct CIs with approximate conditional coverage validity or local coverage validity.  In  \citet{vovk2012conditional}, \citet{lei2014distribution} and \citet{barber2019limits}, the authors  partition the feature space into  $K$ finite subsets and applies conformal inference to each of the subsets:
\[
\bP\{Y_{n+1}\in \hC(X_{n+1})|X_{n+1}\in \mathcal{X}_k\} \geq \alpha, \; \forall k = 1,2,\ldots, K.
\]
for some fixed partition $\cup^K_{k=1} \mathcal{X}_k = \real^p$.  This approach requires to fix  $\cup^K_{k=1} \mathcal{X}_k$  before looking at the test sample $X_{n+1}$. In particular, with $\cup^K_{k=1} \mathcal{X}_k$ being a fixed partition, we can have less than ideal performance for $X_{n+1}$ close to the boundary of $\mathcal{X}_k$.   A second approach is to  reweight the empirical distribution of $\{X_1,\ldots,X_n, X_{n+1}\}$ with $m$ different Gaussian kernels centered at a set of fixed points $\{x_i\in \real^p, i = 1,\ldots, m\}$, and correspondingly, construct $m$ different confidence intervals $\hC(X_{n+1}, x_i)$, $i=1,\ldots, m$ for $Y_{n+1}$. The final CI $\hC(X_{n+1}) = \cup^{m}_{i=1}\hC(X_{n+1}, x_i)$ is the union of all constructed CIs \citep{barber2019conformal}. Similar to the previous approach, it is not ideal to have fixed $\{x_i\in \real^p, i = 1,\ldots, m\}$, and the action of taking the union may lead to unnecessarily wide CIs. Another line of related work consider better score functions for constructing the prediction intervals \citep{lei2018distribution,romano2019conformalized, kivaranovic2019adaptive, sesia2020comparison,izbicki2019distribution,chernozhukov2019distributional}. In \citet{romano2019conformalized} and \citet{kivaranovic2019adaptive}, the authors use score functions based on estimated quantiles instead of  estimated mean values. In \citet{izbicki2019distribution} and \citet{chernozhukov2019distributional}, the authors consider score functions based on estimates of the conditional cumulative distribution\slash density  of $y$ given $x$. Such methods tend to enjoy good empirical results when we can learn the quantiles/densities reasonably well. They  are very different from the localized conformal prediction. The former find pre-fixed score functions $V(.)$ that are more homogenous but follows the usual conformal prediction construction once given the score function, while the later extends the core idea of conformal prediction and allows up-weighting samples close to a given test sample for any score function considered.  They are also not competing with each other: we can always use a perhaps better score function, and apply the idea of localized conformal prediction to guard against overall poor results driven by a small percent of regions, or significant remaining heterogeneity due to a bad model fit. Another slightly related idea is to reweight the training samples to match the distribution of a batch of test samples \cite{barber2019conformal}. In \citet{barber2019conformal}, the authors consider an average coverage for the test sample distribution given that we have enough test samples to estimate the covariate shift \citep{shimodaira2000improving, sugiyama2005input, sugiyama2007covariate,quionero2009dataset}: it uses a pre-fixed weighting function to control the marginal coverage under the test data distribution, and the CIs around the scores will still be homogeneous. Again, it is different from the localized conformal prediction nor does it compete with the localized conformal prediction. In the supplement, we further demonstrate this by applying localized conformal prediction to the covariate shift scenario, and show that the localized\&covariate shift conformal prediction can have narrower confidence bands for regions with enough training samples compared when the distributions of the training and test data are very different.

\section{Localized conformal inference with fixed score function $V(.)$}
\label{sec:method}

We start with the setting where the score function $V(.)$ is fixed. For example,  $V(x, y) = |y - \mu(x)|$ where $\mu(x)$ is a fixed prediction function for the response $y\in \real$ at $x\in \real^p$. In practice, this can correspond to the cases where
\begin{enumerate}
\item  We perform sample splitting, using one fold of the data to train $V(.)$ and the other fold to perform conformal inference.
\item  We have learned $V(.)$ from previous data, but want to apply it to a new data set.
\end{enumerate}

Let  the localizer function $H(x_1, x_{2}, X)\in [0,1]$ for $x_1, x_2 \in \real^p$ be a function that may depend on the  set $X=\{X_1,\ldots, X_{n+1}\}$, and always satisfies $H(x, x, X) = 1$ for all $x\in \real^p$. For the convenience of notation, we define $H_i(.)\coloneqq H(X_i, .,X)$ be the localizer centered at $X_i$, and $H_{i,j} \coloneqq H_i(X_j) = H(X_i, X_j, X)$.  For any distribution $\mathcal{F}$ on $\real$, define its level $\alpha$ quantile as 
\[
Q(\alpha;\mathcal{F}) = \inf\{t: \bP\{T \leq t|T\sim \mathcal{F}\} \geq \alpha\}
\]
Let $\delta_v$  be a  point mass at $v$,  $v_{1:n}\coloneqq \sum^n_{i=1}\delta_{v_i}$ be the empirical distribution of $\{v_1,\ldots, v_n\}$, and $v_{1:n}\cup v_{n+1}\coloneqq \sum^{n+1}_{i=1}\delta_{v_i}$ be the empirical distribution of $\{v_1, \ldots, v_n, v_{n+1}\}$.

The biggest difference between conformal inference and localized conformal inference is that, instead of using the level $\alpha$ quantile of the empirical distribution,  we consider the level $\tilde\alpha$ quantile of the weighted empirical distribution, with weight proportional to $H_{n+1,i}$ for sample $X_i$. The weights allow us to emphasize more the samples close to $X_{n+1}$. Let  $p^{H}_{i,j} \coloneqq \frac{H_{ij}}{\sum^{n+1}_{k=1} H_{ik}}$ for $i,j = 1,\ldots, n+1$, and define $\hat\mF_i \coloneqq  \sum^{n+1}_{j=1}p^H_{i, j}\delta_{V_j}$ as the weighted empirical distribution of $\{V_1,\ldots, V_{n}, V_{n+1}\}$ using the localizer centered at $X_i$, for $i = 1,\ldots, n+1$. Let  $\hat \mF = \sum^n_{i=1} p^H_{n+1,i}\delta_{V_i}+p^{H}_{n+1,n+1}\delta_{\infty}$ be the distribution replacing $V_{n+1}$ with $\infty$ in $\hat \mF_{n+1}$.  We show that $\tilde\alpha$ can be strategically chosen to guarantee the finite sample coverage. 
\begin{corollary}
\label{cor:split1} 
Let $Z_1,\ldots,Z_{n+1}\overset{i.i.d}{\sim}\mathcal{P}$, and $V(.)$ be a fixed function. For any $\tilde\alpha$, let $v^*_{i} =  Q(\tilde\alpha; \hat \mF_i), i = 1,2,\ldots, n+1$. If $\tilde\alpha$ satisfies 
\begin{equation}
\label{eq:goal1}
\sum^{n+1}_{i=1}\frac{1}{n+1}\mathbbm{1}_{V_i \leq v^*_i}  \geq \alpha. \tag{G1}
\end{equation}
then $\bP\left\{V_{n+1} \leq Q(\tilde\alpha;\hat \mF_{n+1} ) \right\}\geq \alpha$, and thus,  $\bP\left\{V_{n+1} \leq Q(\tilde\alpha;\hat \mF) \right\}\geq \alpha$.
\end{corollary}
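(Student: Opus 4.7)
The plan is to reduce the claim to a standard exchangeability argument applied to the indicators $E_i\coloneqq \mathbbm{1}_{V_i\le v_i^*}$. The crux is that because the localizer $H(x_1,x_2,X)$ depends on its third argument only as an unordered set, every quantity entering $E_i$ commutes with permutations of the samples. Concretely, for any permutation $\pi$ of $\{1,\ldots,n+1\}$, relabeling $Z'_i=Z_{\pi(i)}$ leaves the set $X$ unchanged and gives $H'_{i,j}=H_{\pi(i),\pi(j)}$, so $p'^{H}_{i,j}=p^{H}_{\pi(i),\pi(j)}$. A relabeling of the summation index then yields
\[
\hat{\mathcal F}'_i=\sum_{j}p^{H}_{\pi(i),\pi(j)}\,\delta_{V_{\pi(j)}}=\sum_{k}p^{H}_{\pi(i),k}\,\delta_{V_k}=\hat{\mathcal F}_{\pi(i)},
\]
hence $v'^{*}_i=v^{*}_{\pi(i)}$ and $E'_i=E_{\pi(i)}$. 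Combining this with the i.i.d.\ (in particular exchangeable) nature of $(Z_1,\ldots,Z_{n+1})$ shows that $(E_1,\ldots,E_{n+1})$ is an exchangeable random vector.

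Given exchangeability, all $E_i$ share a common mean, so
\[
\bP(V_{n+1}\le v^{*}_{n+1})=\bE[E_{n+1}]=\bE\!\left[\frac{1}{n+1}\sum_{i=1}^{n+1}E_i\right]\ge \alpha,
\]
where the last inequality is exactly (G1). This is the first displayed conclusion, $\bP(V_{n+1}\le Q(\tilde\alpha;\hat{\mathcal F}_{n+1}))\ge \alpha$. For the second conclusion, I would note that $\hat{\mathcal F}$ is obtained from $\hat{\mathcal F}_{n+1}$ by moving the mass $p^{H}_{n+1,n+1}$ from $V_{n+1}$ to $+\infty$; hence $\hat{\mathcal F}((-\infty,t])\le \hat{\mathcal F}_{n+1}((-\infty,t])$ for every finite $t$, so by definition of the quantile $Q(\tilde\alpha;\hat{\mathcal F})\ge Q(\tilde\alpha;\hat{\mathcal F}_{n+1})=v^{*}_{n+1}$, and the inclusion $\{V_{n+1}\le v^{*}_{n+1}\}\subseteq \{V_{n+1}\le Q(\tilde\alpha;\hat{\mathcal F})\}$ transfers the coverage bound.

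The main obstacle is really a careful bookkeeping one: verifying the permutation invariance of $\hat{\mathcal F}_i$ and hence of $v^{*}_i$. A second subtlety is that $\tilde\alpha$ satisfying (G1) will in practice be chosen data-dependently (e.g.\ as the smallest value for which (G1) holds). The exchangeability argument continues to work as long as this choice is a symmetric functional of $(Z_1,\ldots,Z_{n+1})$, which is automatic since the left-hand side of (G1) is itself symmetric in the samples. Existence of a feasible $\tilde\alpha$ is not an issue, since $\tilde\alpha=1$ always makes the indicators identically $1$.
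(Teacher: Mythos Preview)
Your argument is correct and is essentially the paper's own proof, just packaged slightly differently. The paper conditions on the unordered set $\mathcal{T}=\{Z_1,\ldots,Z_{n+1}\}$ and observes that, conditionally, the test index is uniform on $\{1,\ldots,n+1\}$ while the quantities $v_i,v^*_i$ depend only on $\mathcal{T}$; this yields $\bP(V_{n+1}\le v^*_{n+1}\mid\mathcal{T})=\tfrac{1}{n+1}\sum_i\mathbbm{1}_{v_i\le v^*_i}\ge\alpha$, and then one marginalizes. Your exchangeability of the indicator vector $(E_1,\ldots,E_{n+1})$ is exactly the same statement viewed without the explicit conditioning step, and your handling of the data-dependent $\tilde\alpha$ (it must be a symmetric functional of the samples, which (\ref{eq:goal1}) forces) matches the paper's remark that the decision rule does not depend on the ordering. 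For the passage from $\hat\mF_{n+1}$ to $\hat\mF$, the paper invokes the two-sided equivalence of Lemma~\ref{lem:infequiv}; your monotonicity argument gives the one inclusion actually needed here and is a perfectly good shortcut.
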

Corollary \ref{cor:split1} is a special case of Theorem \ref{thm:general1}.   Here, we provide some intuition for why such an $\tilde\alpha$ can guarantee a level $\alpha$ coverage. Conformal inference relies on the exchangeability of data. However, when weighting samples based on a localizer, we break the exchangeability in the training and test samples. Corollary \ref{cor:split1} suggests a way of picking $\tilde\alpha$ that restores some underlying exchangeability, by considering not only the weighted samples based on the localizer around $X_{n+1}$, but also localizers around each of the trainings samples $X_1,\ldots, X_n$. 

It is obvious that for $V_{n+1} = V(X_{n+1},y)$ and any given $y$, $v^*_i$ is non-decreasing in $\tilde\alpha$. Thus, $\tilde\alpha$ satisfies eq.\ (\ref{eq:goal1}) if any smaller value satisfies it. In practice, we would like to pick a small $\tilde\alpha$ in order to construct a short CI. Based on Corollary \ref{cor:split1}, to obtain an interval $\hC(X_{n+1})$ for $Y_{n+1}$, for every possible response value $y$, we let $\tilde\alpha(y)$ be the smallest value for $\tilde\alpha$ that we can find such that eq.\ (\ref{eq:goal1}) holds with $V_{n+1} = V(X_{n+1},y)$, and include $y$ in $\hC(X_{n+1})$ if $V(X_{n+1},y) \leq Q(\tilde\alpha(y);\hat \mF)$.

Such an algorithm is too computationally expensive to carry out in practice.  We instead provide Theorem \ref{thm:split2} which is the foundation of a practical procedure.  
\begin{theorem}
\label{thm:split2}
Let  $Z_1,\ldots,Z_{n+1}\overset{i.i.d}{\sim}\mathcal{P}$, and $V(.)$  be a fixed function. For any $\tilde\alpha$, let $\bar v^* =  Q(\tilde\alpha; \hat \mF)$, $v^*_{i1} =Q(\tilde\alpha; \sum^n_{j=1}p^{H}_{i,j} \delta_{V_j}+p^{H}_{i,n+1}\delta_{\bar v^*}) $,  $v^*_{i2} = Q(\tilde\alpha;\sum^n_{j=1}p^{H}_{i,j} \delta_{V_j}+ p^{H}_{i,n+1}\delta_0)$. If $\bar v^* = \infty$ or if
\begin{equation}
\label{eq:goal2}
 \sum^{n}_{i=1}\frac{1}{n+1}\mathbbm{1}_{V_i \leq v^*_{i1}} \geq \alpha \quad\mbox{and}\quad\sum^{n}_{i=1}\frac{1}{n+1}\mathbbm{1}_{V_i \leq v^*_{i2}} +\frac{1}{n+1}\geq \alpha.  \tag{G2}
\end{equation}
then we have  $\bP\left\{V_{n+1} \leq Q(\tilde\alpha;\hat \mF ) \right\}\geq \alpha$.
\end{theorem}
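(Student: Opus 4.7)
The plan is to reduce Theorem \ref{thm:split2} to Corollary \ref{cor:split1} by showing that, whenever (G2) holds, the data-dependent condition (G1) evaluated at the true (unknown) $V_{n+1}$ also holds. The starting point is a simple monotonicity observation: if for each $i$ one defines
\[
v^*_i(y) \;:=\; Q\!\left(\tilde\alpha;\;\textstyle\sum_{j=1}^n p^H_{i,j}\delta_{V_j}+p^H_{i,n+1}\delta_y\right),
\]
then $y\mapsto v^*_i(y)$ is non-decreasing on $[0,\infty)$, since raising $y$ only shifts the $(n+1)$-th atom rightward and weakly lowers the CDF at every fixed threshold. Because $v^*_i(V_{n+1})=v^*_i$, $v^*_i(\bar v^*)=v^*_{i1}$ and $v^*_i(0)=v^*_{i2}$, this gives $v^*_i\geq v^*_{i2}$ always, and $v^*_i\geq v^*_{i1}$ whenever $V_{n+1}\geq\bar v^*$.

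Next I would case-split on the position of $V_{n+1}$ relative to $\bar v^*$. If $V_{n+1}>\bar v^*$, monotonicity yields $\mathbbm{1}_{V_i\leq v^*_i}\geq\mathbbm{1}_{V_i\leq v^*_{i1}}$ for every $i\leq n$, and the first inequality of (G2) already gives
\[
\sum_{i=1}^{n+1}\frac{1}{n+1}\mathbbm{1}_{V_i\leq v^*_i}\;\geq\;\sum_{i=1}^n\frac{1}{n+1}\mathbbm{1}_{V_i\leq v^*_{i1}}\;\geq\;\alpha.
\]
If $V_{n+1}\leq\bar v^*$, I would first prove the structural fact $V_{n+1}\leq v^*_{n+1}$: because $\hat\mF$ and $\hat\mF_{n+1}$ differ only in the location of the atom of weight $p^H_{n+1,n+1}$ (placed at $\infty$ versus $V_{n+1}$), their CDFs agree for all $t<V_{n+1}$, so $v^*_{n+1}<V_{n+1}$ would force $\bar v^*=v^*_{n+1}<V_{n+1}$, contradicting the case hypothesis. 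With $\mathbbm{1}_{V_{n+1}\leq v^*_{n+1}}=1$ in hand, combining $v^*_i\geq v^*_{i2}$ for $i\leq n$ with the second inequality of (G2) yields (G1) in this case as well.

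Having verified (G1) deterministically under (G2), Corollary \ref{cor:split1} immediately delivers $\bP\{V_{n+1}\leq v^*_{n+1}\}\geq\alpha$, and the trivial bound $v^*_{n+1}\leq\bar v^*$ gives the stated conclusion; the $\bar v^*=\infty$ case is immediate. The main obstacle is spotting the structural fact that rules out the sub-case ``$V_{n+1}\leq\bar v^*$ yet $V_{n+1}>v^*_{n+1}$''; without eliminating this sub-case, the second inequality of (G2) would be one indicator short of what (G1) requires. After that observation, the remainder is routine book-keeping driven by the monotonicity of $v^*_i(\cdot)$.
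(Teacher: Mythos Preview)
Your reduction of (G2) to (G1) on the realized sample path is correct and is precisely the content of the paper's Lemma~\ref{lem:bound}: the monotonicity of $y\mapsto v^*_i(y)$, the case split on $V_{n+1}$ versus $\bar v^*$, and the structural fact ruling out ``$V_{n+1}\le\bar v^*$ yet $V_{n+1}>v^*_{n+1}$'' (which is Lemma~\ref{lem:infequiv}) are exactly the ingredients the paper uses.

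The gap is in invoking Corollary~\ref{cor:split1} as a black box. That corollary is proved by conditioning on the unordered set $\{Z_1,\dots,Z_{n+1}\}$ and using that, for a given $\tilde\alpha$, the left side of (G1) is permutation-invariant (Lemma~\ref{lem:conditional}); this requires $\tilde\alpha$ to depend on the data only through the unordered set. But the $\tilde\alpha$ Theorem~\ref{thm:split2} is meant to cover---in particular the one produced by Algorithm~\ref{alg:alg1} and used in Corollary~\ref{thm:splitC}---is selected so that (G2) holds, and (G2) singles out the label $n{+}1$: it uses $\bar v^*=Q(\tilde\alpha;\hat\mF)$, the quantile of the distribution localized around $X_{n+1}$, and it omits $V_{n+1}$ from its sums. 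So this $\tilde\alpha$ is \emph{not} a symmetric function of $(Z_1,\dots,Z_{n+1})$, and the exchangeability argument behind Corollary~\ref{cor:split1} does not apply directly. The paper makes this point explicitly in the proof of Theorem~\ref{thm:localQuantileFix} (of which Theorem~\ref{thm:split2} is the $w\equiv1$ case): it notes that ``$\tilde\alpha$ is not symmetric on the observations $Z_1,\dots,Z_{n+1}$,'' introduces $\tilde\alpha^{\ell}$ for each possible test index $\ell$, passes to the symmetric surrogate $\min_\ell\tilde\alpha^{\ell}$, applies Lemma~\ref{lem:conditional} to that, and then uses $\tilde\alpha\ge\min_\ell\tilde\alpha^{\ell}$ to return to the original threshold. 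This symmetrization step---not the structural fact you highlight---is the real obstacle, and it is missing from your argument.
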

Instead of finding the value  $\tilde\alpha(y)$ that makes eq.\ (\ref{eq:goal1}) hold for each $y$, we find $\tilde\alpha$ that makes eq.\ (\ref{eq:goal1}) hold for all $y$ simultaneously.  Two components in eq.\ (\ref{eq:goal2}) can be viewed as two hardest cases in eq.\ (\ref{eq:goal1}): (1)  when $V(X_{n+1},y)= 0$, and (2)  when  $V(X_{n+1},y)=\bar v^*$.  This argument can provide some intuition, although the actual proof is more complicated than this. Based on  Theorem \ref{thm:split2}, we can use Algorithm \ref{alg:alg1} to construct the CI for $Y_{n+1}$, which first constructs the CI for $V_{n+1}$ by doing a grid search over  a set of candidate values of $\tilde\alpha$ to  find a small value satisfying eq.\ (\ref{eq:goal1}).
\begin{algorithm}[H]
\caption{Localized conformal inference with fixed score function}
\label{alg:alg1}
\hspace*{\algorithmicindent} \textbf{Input:} Level $\alpha$, scores $V$, weights matrix $p^{H}$ and  grid values $0\leq \alpha_1<\ldots < \alpha_M\leq 1$ (for $\tilde\alpha$).\\
\hspace*{\algorithmicindent} \textbf{Output:} The constructed CI as $\hC(X_{n+1}) = \{y: V(X_{n+1}, y) \leq Q(\tilde\alpha; \hat \mF)\}$.\\
\begin{algorithmic} 
\STATE 1. Grid-search for $\tilde\alpha$, find the smallest value such that either eq.\ (\ref{eq:goal2}) holds or $Q(\tilde\alpha; \hat \mF) = \infty$.
\STATE 2. Invert $V(X_{n+1}, y) \leq Q(\tilde\alpha; \hat \mF)$ to construct the CI for $Y_{n+1}$.
\end{algorithmic}
\end{algorithm}
Now, $\tilde\alpha$ and $\hat \mF$ do not depend on $y$, and typically, it is easy to invert $V(x, y) \leq Q(\tilde\alpha; \hat \mF)$ for any given $x$. As a direct application of Theorem \ref{thm:split2}, Algorithm \ref{alg:alg1} achieves the finite sample coverage guarantee. 
\begin{corollary}
\label{thm:splitC}
Let $Z_1,\ldots,Z_{n+1}\overset{i.i.d}{\sim}\mathcal{P}$, and $V(.)$ be a fixed function. Let $\hC(X_{n+1})\coloneqq  \{y: V(X_{n+1}, y) \leq Q(\tilde\alpha,\hat \mF)\}$  as described in Algorithm \ref{alg:alg1}. Then we have $\bP\{Y_{n+1}\in \hC(X_{n+1})\} \geq \alpha$.
\end{corollary}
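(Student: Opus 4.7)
The plan is to reduce the corollary to a direct application of Theorem \ref{thm:split2} at the (data-dependent) $\tilde\alpha^*$ returned by Algorithm \ref{alg:alg1}. Two things must be verified: that the algorithm always outputs a $\tilde\alpha^*$ meeting the hypothesis of Theorem \ref{thm:split2}, and that the event $\{Y_{n+1}\in\hC(X_{n+1})\}$ coincides with $\{V_{n+1}\leq Q(\tilde\alpha^*;\hat\mF)\}$.

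For the first check, I would verify termination of the grid search. The stopping criterion is either eq.\ (\ref{eq:goal2}) or $Q(\tilde\alpha;\hat\mF)=\infty$; the latter is automatically met once $\tilde\alpha$ is large enough, because $\hat\mF$ places positive mass $p^{H}_{n+1,n+1}$ at $\infty$, so provided the grid contains a sufficiently large value (e.g.\ $\alpha_M=1$) the search terminates. Crucially, the quantities $\bar v^{*}$, $v^{*}_{i1}$, $v^{*}_{i2}$ appearing in the stopping rule depend only on $V_1,\ldots,V_n$ and on the weights $p^{H}_{i,j}$ (which in turn depend only on $X_1,\ldots,X_{n+1}$); they do not involve $V_{n+1}$. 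Consequently $\tilde\alpha^{*}$ is a function solely of $(V_1,\ldots,V_n,X_1,\ldots,X_{n+1})$, and by construction the hypothesis of Theorem \ref{thm:split2} holds at $\tilde\alpha=\tilde\alpha^{*}$ almost surely. Invoking the theorem gives
\[
\bP\left\{V_{n+1}\leq Q(\tilde\alpha^{*};\hat\mF)\right\}\geq\alpha.
\]

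For the second check, substituting $V_{n+1}=V(X_{n+1},Y_{n+1})$ into the definition $\hC(X_{n+1})=\{y:V(X_{n+1},y)\leq Q(\tilde\alpha^{*};\hat\mF)\}$ shows that $Y_{n+1}\in\hC(X_{n+1})$ if and only if $V_{n+1}\leq Q(\tilde\alpha^{*};\hat\mF)$, so the coverage bound transfers and the proof is complete. The only genuine subtlety, and the step I expect to be the main obstacle to write down rigorously, is the legitimacy of invoking Theorem \ref{thm:split2} at a data-dependent $\tilde\alpha^{*}$: one must confirm that the theorem's hypothesis is stated in a form that accommodates random $\tilde\alpha$, and that Algorithm \ref{alg:alg1}'s stopping rule guarantees this hypothesis on the full probability space rather than just for a deterministic choice. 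Given the observations above (in particular, that $\tilde\alpha^{*}$ does not depend on $V_{n+1}$), this reduces to checking that the selection rule preserves whatever exchangeability-type argument underlies Theorem \ref{thm:split2}, which is straightforward since $\tilde\alpha^{*}$ is a symmetric function of the training scores.
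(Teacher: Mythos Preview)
Your proposal is essentially correct and matches the paper's treatment: the paper also states that Corollary~\ref{thm:splitC} is ``a direct application of Theorem~\ref{thm:split2}'' and gives no separate proof. Your checks that the grid search terminates (because $\hat\mF$ puts mass $p^{H}_{n+1,n+1}>0$ at $\infty$) and that $\{Y_{n+1}\in\hC(X_{n+1})\}=\{V_{n+1}\le Q(\tilde\alpha^{*};\hat\mF)\}$ are the right ones.

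One correction to your closing remark: the claim that the data-dependence of $\tilde\alpha^{*}$ is harmless ``since $\tilde\alpha^{*}$ is a symmetric function of the training scores'' is not the right justification. While $\tilde\alpha^{*}$ is symmetric in $V_1,\ldots,V_n$, it is \emph{not} symmetric in the full collection $Z_1,\ldots,Z_{n+1}$, because $\bar v^{*}=Q(\tilde\alpha;\hat\mF)$ is built from the localizer centered at $X_{n+1}$; the test point plays a distinguished role. The paper's proof of Theorem~\ref{thm:split2} (via Theorem~\ref{thm:localQuantileFix}) explicitly acknowledges this asymmetry and handles it by considering the value $\tilde\alpha^{\sigma_{n+1}}$ obtained under every permutation $\sigma$ and passing to the symmetric quantity $\min_{l}\tilde\alpha^{l}$ before invoking the exchangeability lemma. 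So you do not need to argue anything about symmetry yourself: the statement of Theorem~\ref{thm:split2} already covers any $\tilde\alpha$ chosen by a fixed procedure satisfying the displayed condition, and all the work of accommodating the asymmetric, data-dependent selection is done inside that theorem's proof. Simply invoking Theorem~\ref{thm:split2} is enough; drop the symmetry remark.
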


Usual conformal inference is a special case of localized conformal inference when $H_{i,j} = 1$. 
\begin{proposition}
\label{prop:equivalence}
Let  $H_{i,j} = 1$, $\forall i,j =1,\ldots,n+1$, and let $\tilde\alpha =\alpha$. Then, either $\bar v^* = \infty$ or eq.\;(\ref{eq:goal2}) holds, and Theorem \ref{thm:split2} recovers the result that $\bP\left\{V_{n+1} \leq Q(\alpha; V_{1:n}\cup \{\infty\} ) \right\}\geq \alpha$.
\end{proposition}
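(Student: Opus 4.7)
The plan is to substitute the trivial localizer $H_{i,j}=1$ into the objects defined for Theorem~\ref{thm:split2}, reduce everything to standard order-statistic computations, and check the two inequalities in (G2) directly.

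First I would simplify the weights: with $H_{i,j}=1$ we get $p^H_{i,j}=\frac{1}{n+1}$ for all $i,j$, so $\hat\mF$ is the uniform empirical distribution on $\{V_1,\ldots,V_n,\infty\}$, which gives $\bar v^* = Q(\alpha;\hat\mF)=Q(\alpha; V_{1:n}\cup\{\infty\})$. Likewise, $v^*_{i1}$ and $v^*_{i2}$ no longer depend on $i$; they equal $Q(\alpha;V_{1:n}\cup\{\bar v^*\})$ and $Q(\alpha;V_{1:n}\cup\{0\})$ respectively. If $\bar v^*=\infty$ the conclusion of the proposition holds trivially, so henceforth assume $\bar v^*<\infty$ and it suffices to verify (G2).

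Next I would carry out the quantile computation via order statistics. Let $k=\lceil \alpha(n+1)\rceil$ and let $V_{(1)}\leq\cdots\leq V_{(n)}$ denote the ordered scores. Since $\bar v^*<\infty$ we must have $k\leq n$, and $\bar v^*=V_{(k)}$. Inserting this value back into $V_{1:n}$ leaves the $k$-th order statistic unchanged, so $v^*_{i1}=V_{(k)}$ and therefore at least $k$ of the $V_i$ satisfy $V_i\leq v^*_{i1}$, giving $\sum_{i=1}^{n}\frac{1}{n+1}\mathbbm{1}_{V_i\leq v^*_{i1}}\geq\frac{k}{n+1}\geq\alpha$. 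For the second inequality, inserting $0$ shifts the ordering: for $k\geq 2$ we get $v^*_{i2}=V_{(k-1)}$ so the sum is at least $\frac{k-1}{n+1}$, and adding $\frac{1}{n+1}$ gives $\frac{k}{n+1}\geq\alpha$; for $k=1$ we have $\alpha\leq\frac{1}{n+1}$, and the extra $\frac{1}{n+1}$ term alone already suffices. This establishes (G2), and applying Theorem~\ref{thm:split2} yields $\bP\{V_{n+1}\leq Q(\alpha;\hat\mF)\}\geq\alpha$, which is exactly the classical conformal statement.

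The argument is essentially bookkeeping, so the only real care point is the quantile inheritance under insertion. The mild subtlety I would watch for is the edge case $k=1$ (where inserting $0$ actually becomes the new minimum) and the boundary $k=n+1$ where $\bar v^*=\infty$; both are handled cleanly by the $\lceil\alpha(n+1)\rceil$ formula and by the disjunction already built into Theorem~\ref{thm:split2}. No new ingredients beyond the definitions of $Q$ and of $\hat\mF$, $v^*_{i1}$, $v^*_{i2}$ are needed.
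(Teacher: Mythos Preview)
Your proof is correct and follows essentially the same approach as the paper: both reduce everything to order statistics once the weights collapse to $\tfrac{1}{n+1}$, then verify the two parts of \eqref{eq:goal2} separately under the assumption $\bar v^*<\infty$. Your treatment is a bit more direct---you compute $v^*_{i1}=V_{(k)}$ and $v^*_{i2}=V_{(k-1)}$ explicitly and read off the inequalities, whereas the paper argues $v^*_{i1}\geq \bar v^*$ by contradiction and handles the second part via a shifted-level quantile formula for $V_{1:n}$---but the substance is identical.
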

Two questions the reader may want to ask are: (1) how tight is the coverage of the localized conformal prediction CI, and (2) what happens if we simply let $\tilde\alpha = \alpha$ without tuning it based on eq.\ (\ref{eq:goal2})? The answer to both of these will depend on the localizer $H$.   In Theorem \ref{cor:split1}, the coverage may not be exactly $\alpha$ because we may not be able to select  $\tilde\alpha$ such that $\sum^{n+1}_{i=1}\frac{1}{n+1}\mathbbm{1}_{V_i \leq v^*_i}  = \alpha$ exactly.  However, Corollary \ref{cor:split3} says that if we take a random decision rule to get rid of the rounding issue in Corollary \ref{cor:split1}, then the resulting randomized decision rule will be tight. 
\begin{corollary}
\label{cor:split3}
In the setting of Theorem \ref{thm:split2}, for any $\alpha \in (0,1)$, let $\tilde\alpha_1$ be the smallest value of $\tilde\alpha$ such that $ \sum^{n+1}_{i=1}\frac{1}{n+1}\mathbbm{1}_{V_i \leq v^*_i}  \geq \alpha$,  and let $\tilde\alpha_2$ be the largest of $\tilde\alpha$ such that $\sum^{n+1}_{i=1}\frac{1}{n+1}\mathbbm{1}_{V_i \leq v^*_i}  < \alpha$. Let $\alpha_1$, $\alpha_2$ be the values of  $ \sum^{n+1}_{i=1}\frac{1}{n+1}\mathbbm{1}_{V_i \leq v^*_i}$  attained at $\tilde{\alpha}_1$, $\tilde\alpha_2$, and let $\tilde\alpha = \left\{\begin{array}{cc}\tilde \alpha_1&w.p.\;\frac{\alpha - \alpha_2}{\alpha_1-\alpha_2}\\
\tilde\alpha_2 & w.p.\;\frac{\alpha_1 - \alpha}{\alpha_1-\alpha_2}
\end{array}\right.$. Then, we have $\bP\left\{V_{n+1} \leq Q(\tilde\alpha;\hat \mF ) \right\}= \alpha$.
\end{corollary}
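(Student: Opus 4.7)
The plan is to combine the exchangeability argument underlying Corollary \ref{cor:split1} with the randomized choice of $\tilde\alpha$ to sharpen the resulting inequality into an equality. I will implement the randomization via an auxiliary $U\sim\mathrm{Unif}[0,1]$ independent of $(Z_1,\ldots,Z_{n+1})$, setting $\tilde\alpha=\tilde\alpha_1$ on $\{U\leq p\}$ and $\tilde\alpha=\tilde\alpha_2$ otherwise, where $p=(\alpha-\alpha_2)/(\alpha_1-\alpha_2)$. The first step is to verify that $\tilde\alpha_1,\tilde\alpha_2,\alpha_1,\alpha_2$, and $p$ are all permutation-invariant functions of $(Z_1,\ldots,Z_{n+1})$: each is determined from the step function $\tilde\alpha\mapsto g(\tilde\alpha)\coloneqq\tfrac{1}{n+1}\sum_{i=1}^{n+1}\mathbbm{1}\{V_i\leq Q(\tilde\alpha;\hat\mF_i)\}$, which is symmetric in the indices because the weighted empirical distributions $\hat\mF_i$ are built from the same unordered set $\{Z_1,\ldots,Z_{n+1}\}$. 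Consequently, the joint law of $(Z_1,\ldots,Z_{n+1},U)$ is invariant under permutations of the first $n+1$ coordinates.

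Next I would apply this exchangeability to the indicators $E_i\coloneqq\mathbbm{1}\{V_i\leq Q(\tilde\alpha;\hat\mF_i)\}$: since each $E_i$ arises from the same measurable mapping applied with the $i$-th coordinate singled out, and the randomized $\tilde\alpha$ is itself a symmetric statistic, $\bP\{E_i=1\}$ does not depend on $i$. Averaging gives
\begin{equation*}
\bP\{V_{n+1}\leq Q(\tilde\alpha;\hat\mF_{n+1})\}=\frac{1}{n+1}\sum_{i=1}^{n+1}\bP\{E_i=1\}=\bE[g(\tilde\alpha)].
\end{equation*}
By the construction of the randomization, $g(\tilde\alpha_k)=\alpha_k$ for $k=1,2$, hence $\bE[g(\tilde\alpha)\mid Z_1,\ldots,Z_{n+1}]=p\alpha_1+(1-p)\alpha_2=\alpha$, and taking outer expectation yields $\bP\{V_{n+1}\leq Q(\tilde\alpha;\hat\mF_{n+1})\}=\alpha$.

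The remaining task is to swap $\hat\mF_{n+1}$ for $\hat\mF$, which moves the mass $p^H_{n+1,n+1}$ from $V_{n+1}$ to $\infty$. A short case analysis shows the two events coincide: since $\hat\mF\leq\hat\mF_{n+1}$ pointwise on $\real$, we have $Q(\tilde\alpha;\hat\mF)\geq Q(\tilde\alpha;\hat\mF_{n+1})$, so $V_{n+1}\leq Q(\tilde\alpha;\hat\mF_{n+1})$ implies $V_{n+1}\leq Q(\tilde\alpha;\hat\mF)$; conversely, if $V_{n+1}>Q(\tilde\alpha;\hat\mF_{n+1})$ then some $t<V_{n+1}$ has $\hat\mF_{n+1}(t)\geq\tilde\alpha$, and the agreement $\hat\mF(t)=\hat\mF_{n+1}(t)$ on $(-\infty,V_{n+1})$ forces $\hat\mF(t)\geq\tilde\alpha$ and hence $Q(\tilde\alpha;\hat\mF)<V_{n+1}$. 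The probability $\alpha$ therefore transfers to the event in the statement.

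The step I expect to be most delicate is the first one: carefully verifying that the randomized $\tilde\alpha$ is a symmetric statistic of the data, so that exchangeability of $(Z_1,\ldots,Z_{n+1},U)$ applies and permuting coordinates does not alter the joint distribution driving the indicators $E_i$. Once that invariance is pinned down, the conditional-expectation identity and the case analysis linking $\hat\mF_{n+1}$ to $\hat\mF$ are both routine.
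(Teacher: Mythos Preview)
Your proposal is correct and follows essentially the same route as the paper. The paper proves the more general Theorem \ref{thm:tightness} (of which this corollary is the $w\equiv1$ case) by conditioning on the unordered set $\mathcal{T}=\{Z_1,\ldots,Z_{n+1}\}$, observing that $\tilde\alpha_1,\tilde\alpha_2,\alpha_1,\alpha_2$ are fixed given $\mathcal{T}$ so the randomized $\tilde\alpha$ is conditionally independent of the ordering, invoking Lemma \ref{lem:conditional} to get $\bP\{V_{n+1}\leq Q(\tilde\alpha;\hat\mF_{n+1})\mid\mathcal{T}\}=p\alpha_1+(1-p)\alpha_2=\alpha$, and then applying Lemma \ref{lem:infequiv} to pass from $\hat\mF_{n+1}$ to $\hat\mF$; your permutation-invariance of $g$, the averaging $\bP\{E_{n+1}=1\}=\bE[g(\tilde\alpha)]$, and your case analysis for the swap are exactly these three ingredients written out directly rather than through the two lemmas.
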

Corollary \ref{cor:split3} is a special case of Theorem \ref{thm:general2}.  For the second question, we provide  Example \ref{exm:counter1} and Example \ref{exm:counter2} here, which show that letting $\tilde\alpha = \alpha$ may lead to both over-coverage and under-coverage.
\begin{example}
\label{exm:counter1}
Let $\alpha\in (0,1)$, let $\mathcal{P}_X$ be any  jointly continuous density for feature $x$,  and  consider the localizer $H(x_1, x_2) =\exp(-\frac{|x_1-x_2|}{\sigma})$. For any $1>\epsilon > \alpha$, we can always choose $\sigma$ to be small enough such that with probability at least $\epsilon$, we have $\sum^{n}_{i=1} H(X_{n+1}, X_i) < \frac{1}{\alpha}$ when $X_1, \ldots, X_{n+1}$ are independently generated from $\mathcal{P}_X$. Then, with probability at least $\epsilon$, we will have $Q(\alpha; \hat F) = \infty > V_{n+1}$. Hence, the achieved coverage is at least $\epsilon > \alpha$.
\end{example}

\begin{example}
\label{exm:counter2}
We consider an intuitive approach that practitioners may want to perform in practice:  Let $H_{i,j} = \mathbbm{1}_{|X_j - X_i|\leq h}$  for some fixed distance $h$ and  let $\tilde\alpha = \alpha\in (0,1)$.  Consider the following distribution: 
\[
X_i= \left\{\begin{array}{c c}-1 & \mbox{w.p}\; \frac{1-\alpha}{2-\alpha}\\ 0 & \mbox{w.p}\; (1-\frac{2(1-\alpha)}{2-\alpha}) \\ 1 & \mbox{w.p}\;  \frac{1-\alpha}{2-\alpha} \end{array}\right.
\]
and $Y_i = X_i +\epsilon_i$ with $\epsilon_i|X_i \sim \mbox{Uniform}([-2|X_i|, 2|X_i|])$. Let $V(x,y)=|y-x|$.  Then $V_i \sim \mbox{Uniform}(0, 2|X_i|)$. Suppose we set $h = 1.5$, and consider the asymptotic case when $n\rightarrow \infty$: If $X_{n+1} = 1$, we know that the method considers only training samples at $1$ and at $0$, with asymptotic proportions $(1-\alpha)$ and $\alpha$ respectively.  Then $Q(\alpha; \hat \mF)\rightarrow 0$ at $X_{n+1} = 1$ and $P(V_{n+1} \leq Q(\alpha; \hat \mF)|X_{n+1} = 1)\rightarrow 0$. Similarly, we have $P(V_{n+1} \leq Q(\alpha; \hat \mF)|X_{n+1} = -1)\rightarrow 0$. Thus, the achieved coverage is asymptotically $1-\frac{2(1-\alpha)}{2-\alpha}$, and we have an under-coverage of
\[
\alpha - (1-\frac{2(1-\alpha)}{2-\alpha}) = \frac{\alpha(1-\alpha)}{2-\alpha}, \forall \alpha \in [0,1]
\]
\end{example}

\subsection{Choice of H}
\label{sec:method_h}
The choice of $H$ will greatly influence how localized our algorithm is.  Suppose that we have a data set $\mathcal{D}_0$ which is generated according to $\mathcal{P}$ and is  independent of $Z=\{Z_1,\ldots, Z_n, Z_{n+1}\}$. We  consider two types of localizers and will tune them using $\mathcal{D}_0$:
 \begin{enumerate}
 \item  Distance based localizer: $H_h(x_1, x_2,X)  = \mathbbm{1}_{\{|\frac{x_2- x_1}{h}| \leq 1\}}$.
 \item Nearest-neighbor based localizer: $H_h(x_1, x_2,X) = \mathbbm{1}_{\{|x_1-x_2| \leq Q(\frac{h}{n}; \sum^{n+1}_{i=1}\delta_{|X_i - x_1|}) \}}$.
 \end{enumerate}
 In practice, we want to  to tradeoff between locality and volatility, and choose $h$ to have relatively narrow and stable CIs for most of the samples. We propose a way of doing it in a data adaptive way, and we give details of the proposal in Supplement section \ref{app:choiceH}. Also, when the dimension of the features is high, we may want to find some low dimensional space to capture the heterogeneity in the score functions and use weights based on the low dimensional projected distances. Finding good low dimensional projection with high dimensional data is a non-trivial and separate topic, and we consider only low dimensional features in the main paper. A  relatively simple high dimensional example is given in Supplement section \ref{app:choiceH} for illustrating purpose.
\section{Empirical study with fixed V(.)}
\label{sec:sim1}
We compare the localized conformal inference band (LCB) using Algorithm \ref{alg:alg1} and  conformal inference band (CB) in this section.  
\begin{example}
\label{exm1}
Let $X_i \sim N(0,1)$ and $Y_i= X_i+\epsilon_i$ for $i = 1,\ldots, n+1$.  We use the fixed score function $V(X_i, Y_i) = |Y_i - X_i|$ to do inference for both the conformal and localized conformal approaches.  For localized conformal inference, we consider the distance based localizer  $H^1_h(X_j, X_i) = \mathbbm{1}_{|X_i - X_j|\leq h}$ and  the nearest-neighbor based localizer $H^2_h(X_j,X_i) = \mathbbm{1}_{|X_i - X_j|\leq Q(\frac{h}{n}; \sum^{n+1}_{k=1}\frac{1}{n+1}\delta_{|X_k - X_i|})}$ for $h$ nearest neighbors. We try three different values  $h_1$, $h_2$ and $h_3$ of the tuning parameter $h$. For $H_h^1$, we let $h_1 = .1$, $h_2 = 1$ and $h_3 = \hat h_1$, and for $H^2_h$, we let $h_1 = 40$, $h_2 = 500$ and $h_3 = \hat h_2$, where $\hat h_1$ and $\hat h_2$ are  automatically chosen using another $i.i.d$ generated data set with $n$ samples according to Appendix \ref{app:choiceH} in the Supplement.

For each of the following noise generating mechanisms, we let $n = 500$ and repeat the experiment 1000 times: (a) $\epsilon_i \overset{i.i.d}{\sim} N(0,1)$, (b)$\epsilon_i|X_i \sim \frac{1}{2|X_i|+1}N(0,1)$,  or (c)$\epsilon_i|X_i \sim \frac{|X_i|}{|X_i|+1}N(0,1)$.   Table \ref{tab:exm1} shows the achieved coverage for $\alpha = .80$ and $\alpha = 0.95$. We can see that both conformal prediction and localized conformal prediction with different localizers have achieved the desired coverage. Figure \ref{fig:exm1} shows the constructed confidence bands across 1000 repetitions using different methods at $\alpha = .95$.

As $h$ increases, the localized conformal bands become more similar to the conformal bands.  Comparing results for  localized conformal inference with $h = h_1$ and $h = h_2$, we see that  small $h$ reveals more local structure.  Using the automatic tuning procedure, we have successfully chosen large $h$ when the underlying distribution of $V(X_{n+1})$ is homogeneous and small $h$ when it is heterogeneous across different values of $X_{n+1}$.
\begin{table}
\centering
\caption{Example \ref{exm1}, Coverage. Column names $h_1, h_2, h_3$ represent the tuning parameter being 0.1, 1, $\hat h_1$ for the distance based localizer $H^1_h$ and tuning parameters being  $40, 500, \hat h_2$ for the nearest-neighbor based localizer $H_h^2$.}
\label{tab:exm1}
\begin{tabular}{|l|rrr|rrr|rrr|}
  \hline
    $\alpha = .95$ & &(a) &  & &(b) &   & &  (c)  & \\ 
     \hline
 & $h_1$ &$h_2$  &$h_3$  & $h_1$ &$h_2$ &$h_3$ & $h_1$ &$h_2$&$h_3$    \\ 
  \hline
CB &0.95 & &  & 0.94 &  &  & 0.96 &  &  \\ 
  LCB, $H_1$ &0.96 & 0.96 & 0.96 & 0.96 & 0.94 & 0.95 & 0.96 & 0.96 & 0.96 \\ 
    LCB, $H_2$ & 0.96 & 0.96 & 0.96 & 0.96 & 0.94 & 0.95 & 0.96 & 0.96 & 0.96 \\ 
   \hline
     $\alpha = .80$ & $h_1$ &$h_2$  &$h_3$  & $h_1$ &$h_2$ &$h_3$ & $h_1$ &$h_2$&$h_3$ \\ 
     \hline
CB &  0.80 &  &  & 0.81 &  &  & 0.81 &  &  \\ 
  LCB, $H_1$  &  0.81 & 0.81 & 0.81 & 0.82 & 0.82 & 0.82 & 0.81 & 0.82 & 0.81 \\ 
    LCB, $H_2$  &0.81 & 0.81 & 0.81 & 0.82 & 0.82 & 0.82 & 0.81 & 0.82 & 0.81 \\ 
   \hline
\end{tabular}
\vskip -.1in
\end{table}
\begin{figure}
\caption{\em Example \ref{exm1}. Confidence bands constructed using 1000 repetitions with targeted level at $\alpha = .95$. The black,  blue, red and green dots respectively represent (1) the true responses for the test samples (response), (2) the conformal confidence bands (CB), (3) the localized conformal confidence bands with distance localizer $H_h^1$ (LCB1), and (4) the localized conformal confidence bands with nearest-neighbor based localizer $H_h^2$ (LCB2). The red dots close to the top and bottom within each plot represent samples whose CIs based on LCB1 have infinite length (both the CB and the LCB2 do not have infinite length CI by construction).}
\label{fig:exm1}
\begin{center}
\includegraphics[width = .8\textwidth, height = .5\textwidth]{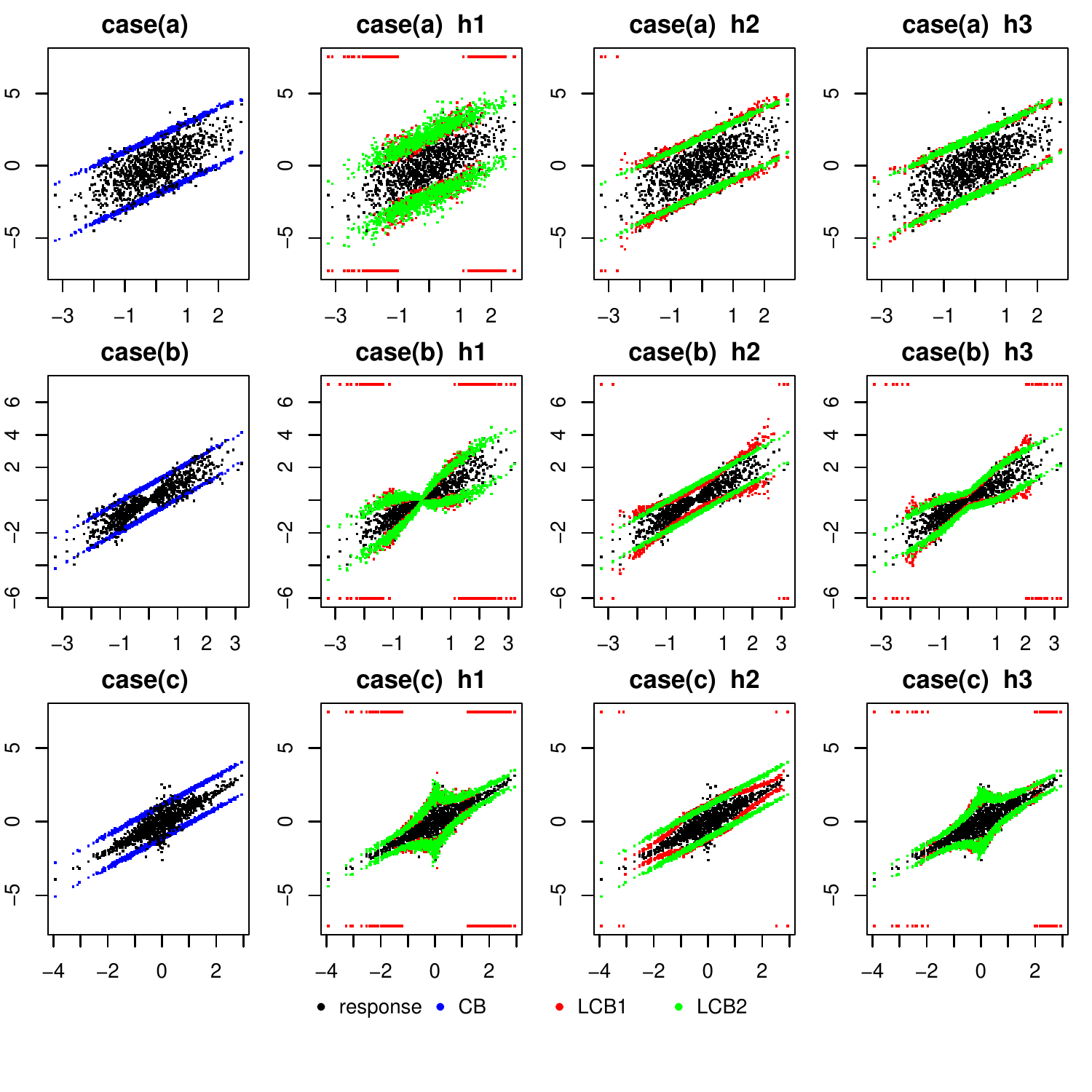} 
\end{center}
\vskip -.4in
\end{figure}
\end{example}

\section{Extensions}
\label{sec:extension}
In this part, we  draw a connection between the method of localized conformal and the notion of local coverage/asymptotic conditional coverage, and consider an extension where the score function can be data-dependent (but the exchangeability requirement is still needed). 

\subsection{Local and asymptotic conditional coverage}
In  \cite{barber2019conformal}, the authors suggest to consider the following type of local coverage: let $x_0\in \real^p$ and $P^{x_0}_X$ be a distribution concentrated at $x_0$, with $\frac{d P^{x_0}_X(x)}{d x} \propto \frac{d P_X(x)}{d x}K(\frac{x- x_0}{h})$ and $K(\frac{x}{h})$ being the Gaussian kernel with bandwidth $h$, we would like $\hC(x_0)$ such that 
\begin{equation}
\label{eq:local0}
\bP\{\tilde Y_{n+1}\in \hC(x_0)\}\geq \alpha,\; \tilde X_{n+1}\sim P^{x_0}_X, \tilde Y_{n+1} |\tilde X_{n+1} \sim P_{Y|X}
\end{equation}
 The proposal discussed in \cite{barber2019conformal} considers the situation where we want eq.\ (\ref{eq:local0}) to hold for a  set of fixed values for $x_0$, and different values of $x_0$ can lead to different constructed CIs for   a new observation $X_{n+1}$ (see section \ref{sec:related}).

With the score function $V(.)$  fixed, localized conformal inference provide a simple way  to construct a unique $\hC(X_{n+1})$ for every test sample such that  the type local coverage requirement defined in eq.(\ref{eq:local0}) is satisfied.

Let $H(x_1, x_2, X) = H(x_1, x_2)$ to be a data-independent localizer and define the localized distribution $P^{x_0}_X(.)$ around $x_0$ as $\frac{d P^{x_0}_X(x)}{d x} \propto \frac{d P_X(x)}{d x}H(x_0, x)$, and $\hat \mF$ is defined with the localizer $H(X_{n+1}, .)$.
\begin{theorem}
\label{thm:approximate}
\textbf{(a)} Let $Z_1,\ldots, Z_{n}\overset{i.i.d}{\sim} \mathcal{P}$ and  $V(.)$ to be fixed.  Let $\hC(X_{n+1})\coloneqq  \{y: V(X_{n+1}, y) \leq Q(\alpha,\hat \mF)\}$. Conditional on $X_{n+1}$, let $\tilde X_{n+1}|X_{n+1} \sim P^{X_{n+1}}_X$, and $\tilde Y_{n+1}|\tilde X_{n+1}\sim P_{Y|X}$. Then, we have  $\bP\{\tilde Y_{n+1} \in \hC( X_{n+1})|X_{n+1} = x_0\}\geq \alpha$  for all  $x_0$. 

\textbf{(b)} If the data distribution satisfies regularity conditions: (1) $X$ is on $[0,1]^d$ with marginal density  satisfying $0<b_1\leq p_X(x)\leq b2 <\infty$ for constants $b_1$, $b_2$. (2) The conditional density of $V$ given $X$ is Lipschitz in $X$: $\|p_{V|X}(.|x) - p_{V|X}(.|x')\|_{\infty} \leq L\|x-x'\|$ for a constant $L$. Then, we have $[\alpha - \bP(Y_{n+1}\in \hC(X_{n+1})|X_{n+1}=x_0)]_+\overset{h\rightarrow0}{\rightarrow} 0$.
\end{theorem}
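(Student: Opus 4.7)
The plan for \textbf{part (a)} is to reduce the local coverage claim to the weighted exchangeability framework (Tibshirani--Barber--Cand\`es--Ramdas, 2019, on conformal prediction under covariate shift). Conditioning on $X_{n+1}=x_0$, the hypothetical test point $(\tilde X_{n+1},\tilde Y_{n+1})$ has distribution tilted from $\mathcal{P}$ with likelihood ratio $w(z)\propto H(x_0,x)$. Applying weighted conformal prediction then yields
\[
\bP\!\bigl(V(\tilde X_{n+1},\tilde Y_{n+1})\leq Q(\alpha,\overline{\mF})\bigr)\geq \alpha,
\]
where $\overline{\mF}$ assigns mass $H(x_0,X_i)/S$ to $\delta_{V_i}$ for $i\leq n$ and mass $H(x_0,\tilde X_{n+1})/S$ to $\delta_\infty$, with $S=\sum_{j=1}^n H(x_0,X_j)+H(x_0,\tilde X_{n+1})$. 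To finish, I would compare $\hat\mF$, which uses the value $H(x_0,x_0)=1$ in place of $H(x_0,\tilde X_{n+1})$ in the $\delta_\infty$ weight, with $\overline{\mF}$. Since $H(x_0,\tilde X_{n+1})\in[0,1]$, a direct algebraic check gives $p^{H}_{n+1,n+1}\geq \bar w_{n+1}$ and $p^{H}_{n+1,i}\leq \bar w_i$ for all $i\leq n$, so $\hat\mF$ places at least as much mass on $\delta_\infty$ as $\overline{\mF}$. Consequently $Q(\alpha,\hat\mF)\geq Q(\alpha,\overline{\mF})$ almost surely, and monotonicity in the threshold closes part (a).

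For \textbf{part (b)}, let $q_h:=Q(\alpha,\hat\mF)$, a random threshold depending on $Z_1,\ldots,Z_n$ and $h$. Part (a) yields $\bE\!\bigl[\int \tfrac{dP^{x_0}_X}{dx}(x)\,F_{V|X}(q_h\mid x)\,dx\bigr]\geq \alpha$, whereas the target is $\bE[F_{V|X}(q_h\mid x_0)]$. The plan is to bound the difference
\[
\int \frac{dP^{x_0}_X}{dx}(x)\,\bE\!\bigl[F_{V|X}(q_h\mid x)-F_{V|X}(q_h\mid x_0)\bigr]\,dx
\]
using a Lipschitz-type control $|F_{V|X}(q\mid x)-F_{V|X}(q\mid x_0)|\lesssim \|x-x_0\|$ together with the fact that, because $b_1\leq p_X\leq b_2$ on $[0,1]^d$, the localized density $\tfrac{dP^{x_0}_X}{dx}$ concentrates at $x_0$ as $h\to 0$, making $\int \tfrac{dP^{x_0}_X}{dx}(x)\|x-x_0\|\,dx\to 0$.

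The principal obstacle is producing the uniform CDF Lipschitz bound from the $L_\infty$ density hypothesis: integrating $\|p_{V|X}(\cdot\mid x)-p_{V|X}(\cdot\mid x_0)\|_\infty\leq L\|x-x_0\|$ over the unbounded $v$-domain does not give a finite bound on its own. I would handle this with a truncation argument: split the $v$-integral at a cutoff $M$, bound the $v\leq M$ contribution by $LM\|x-x_0\|$ via the Lipschitz hypothesis, and control the $v>M$ tail uniformly in $x$ over the effective support of $P^{x_0}_X$ by $\bP(V>M\mid X=x)+\bP(V>M\mid X=x_0)$. Taking $M$ to grow slowly relative to $1/h$ sends both pieces to zero, giving $[\alpha-\bP(Y_{n+1}\in \hC(X_{n+1})\mid X_{n+1}=x_0)]_+\to 0$.
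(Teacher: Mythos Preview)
Your proposal is correct and follows essentially the same route as the paper. For part~(a), both you and the paper invoke the covariate-shift conformal result (Proposition~\ref{prop:prop2}) with likelihood ratio proportional to $H(x_0,\cdot)$, and then use $H(x_0,x_0)=1\geq H(x_0,\tilde X_{n+1})$ to pass from the $\tilde X_{n+1}$-normalized quantile to $Q(\alpha,\hat\mF)$. For part~(b), both arguments start from part~(a) and reduce the conditional-coverage deficit to an integral of the form $\int\tfrac{dP^{x_0}_X}{dx}(x)\,\|x-x_0\|\,dx$, which the paper then computes explicitly for the Gaussian kernel to obtain an $O(h)$ bound.

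The one place you diverge is the density-to-CDF step in part~(b). The paper applies the Lipschitz bound in a single line, writing the error as $\tfrac{L}{M}\int\|x-x_0\|K(\tfrac{x-x_0}{h})\,dP_X(x)$ without addressing the unbounded $v$-integration you flag. Your truncation at a growing cutoff $M$ is a reasonable way to make this rigorous; just note that the tail piece $\sup_{x}\bP(V>M\mid X=x)\to 0$ does not follow from the stated $L_\infty$ density Lipschitz hypothesis alone (integrating a uniform pointwise bound over $[M,\infty)$ diverges), so you will need either bounded support of $V$ or a separate uniform-in-$x$ tightness assumption. The paper's one-line bound implicitly relies on the same missing ingredient, so this is a shared gap rather than a flaw specific to your approach.
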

 The confidence interval $\hat C(X_{n+1})$ is indexed by $X_{n+1}$, and when the training set does not change, we will have a unique confidence interval for every realization of $X_{n+1}$. The regularity conditions here are essentially a subset of regularity conditions used in \citet{lei2014distribution}. With a Gaussian kernel, we can have local coverage when setting $\tilde\alpha = \alpha$ under relatively mild regularity conditions as the bandwidth $h\rightarrow 0$. We defer the discussion of efficiency as a future work since it will depend on both the score function used, sample size and localizers, while we want to focus on introducing the idea of localized conformal prediction as an extension to the general conformal prediction framework.

\subsection{Localized conformal inference with data-dependent score function}
\label{sec:method1}
In this section, we consider a more general case where the score function can have some data dependency but still leads to exchangeability. Let $Z = \{Z_1,\ldots, Z_n, Z_{n+1}\}$ be the set of  training  and test samples, the score function can depend on the set $Z$ but not their ordering, and have the form $V(., Z)$.  To accommodate for this more general case and distinguish it from the case with fixed score function, we introduce some new notations for convenience. Define $V^{z_{n+1}}_i \coloneqq V(Z_i, Z)|_{Z_{n+1} = z_{n+1}}$ and $\hat \mF^{z_{n+1}}_i \coloneqq \left(\sum^{n+1}_{j=1} p^H_{i,j}\delta_{V^{Z_{n+1}}_i}\right)|_{Z_{n+1} = z_{n+1}}, \forall i = 1,\ldots, n+1$, and $\hat \mF^{z_{n+1}} \coloneqq \left(\sum^{n}_{j=1} p^H_{n+1,j}\delta_{V^{Z_{n+1}}_j}+p^H_{n+1,n+1}\delta_\infty\right)|_{Z_{n+1} = z_{n+1}}$, as the realizations of $V(Z_i, Z)$ and the weighted distribution $\hat \mF_i$ , $\hat \mF$ at $Z_{n+1} = z_{n+1}$. For example, $V^{z_{n+1}}_{n+1} = V(z_{n+1}, \{Z_1,\ldots, Z_n, z_{n+1}\})$ and  $V^{z_{n+1}}_{i} = V(Z_i, \{Z_1,\ldots, Z_n, z_{n+1}\})$ for $i = 1,\ldots, n$. We will always use $V$ and $\mF$ with the superscript to represent that data-dependency is allowed, and use the ones without superscript to represent that the score function is fixed.  Theorem \ref{thm:general1} and   Theorem \ref{thm:general2}  are extensions of Corollary \ref{cor:split1} and \ref{cor:split3}, allowing for data dependent score functions.
\begin{theorem}
\label{thm:general1} 
Let $Z_1,\ldots,Z_{n+1}\overset{i.i.d}{\sim}\mathcal{P}$. For any $\tilde\alpha$, define  $v^*_{i} =  Q(\tilde\alpha; \hat \mF^{z_{n+1}}_i)$, $i = 1,2,\ldots, n+1$. If $\tilde\alpha$ satisfies
\begin{equation}
\label{eq:goal3}
\sum^{n+1}_{i=1}\frac{1}{n+1}\mathbbm{1}_{V^{Z_{n+1}}_i \leq v^*_i}  \geq \alpha  
\end{equation}
Then $\bP\left\{V^{Z_{n+1}}_{n+1} \leq Q(\tilde\alpha;\hat \mF^{Z_{n+1}}_{n+1} ) \right\}\geq \alpha$, and thus,  $\bP\left\{V^{Z_{n+1}}_{n+1} \leq Q(\tilde\alpha;\hat \mF^{Z_{n+1}}) \right\}\geq \alpha$.
\end{theorem}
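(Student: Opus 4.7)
The plan is to reduce the statement to a single exchangeability calculation on the jointly constructed pairs $\{(V^{Z_{n+1}}_i, v_i^*)\}_{i=1}^{n+1}$, in the same spirit as Corollary~\ref{cor:split1} but allowing the scores to depend on the full multiset $\mathcal{Z}=\{Z_1,\ldots,Z_{n+1}\}$. First I would check the required symmetry: because $V(\cdot, Z)$ by assumption depends on $Z$ only as an unordered multiset, and the localizer weights $p^H_{i,j}$ are built from $H(X_i, X_j, X)$ where $X$ is also treated as an unordered set, a direct substitution shows that for any permutation $\pi$ of $\{1,\ldots,n+1\}$, relabeling the data as $Z'_i = Z_{\pi(i)}$ sends $V^{Z_{n+1}}_i\mapsto V^{Z_{n+1}}_{\pi(i)}$ and $\hat\mF^{Z_{n+1}}_i\mapsto \hat\mF^{Z_{n+1}}_{\pi(i)}$, hence $v_i^*\mapsto v_{\pi(i)}^*$. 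This is valid provided $\tilde\alpha$ itself is a symmetric function of $\mathcal{Z}$, which is automatic for any natural selection rule (e.g.\ the minimal $\tilde\alpha$ satisfying (\ref{eq:goal3})) because (\ref{eq:goal3}) is itself a sum over $i$ of terms that permute covariantly.

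Next I would condition on the unordered multiset $\mathcal{Z}$. Since the $Z_i$ are i.i.d., conditional on $\mathcal{Z}$ the ordered labeling $(Z_1,\ldots,Z_{n+1})$ is uniform over the arrangements of the multiset (with the standard caveat about ties, which occur with probability zero in the continuous case and can otherwise be absorbed by randomized tie-breaking). Combining this with the symmetry above,
\[
\bE\!\left[\mathbbm{1}\{V^{Z_{n+1}}_{n+1}\leq v^*_{n+1}\}\mid \mathcal{Z}\right]=\frac{1}{n+1}\sum_{i=1}^{n+1}\mathbbm{1}\{V^{Z_{n+1}}_i\leq v^*_i\}.
\]
Hypothesis (\ref{eq:goal3}) states the right-hand side is $\geq\alpha$, so taking expectations yields $\bP\{V^{Z_{n+1}}_{n+1}\leq Q(\tilde\alpha;\hat\mF^{Z_{n+1}}_{n+1})\}\geq\alpha$, which is the first conclusion. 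The second conclusion follows by a monotonicity step: the only difference between $\hat\mF^{Z_{n+1}}_{n+1}$ and $\hat\mF^{Z_{n+1}}$ is that the mass $p^H_{n+1,n+1}$ sits at $V^{Z_{n+1}}_{n+1}$ in the former and at $+\infty$ in the latter, so $Q(\tilde\alpha;\hat\mF^{Z_{n+1}}_{n+1})\leq Q(\tilde\alpha;\hat\mF^{Z_{n+1}})$ and the first event is contained in the second.

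\textbf{Main obstacle.} The only non-routine issue is keeping the data-dependence of $\tilde\alpha$ consistent with the exchangeability argument: if $\tilde\alpha$ depended on the ordering of the data rather than on the unordered multiset $\mathcal{Z}$, the displayed identity above could fail because $v^*_i$ would no longer transform covariantly under permutations. The proof must therefore spell out that the selection criterion (\ref{eq:goal3}) is permutation-invariant, so that any rule for choosing $\tilde\alpha$ from (\ref{eq:goal3}) (as in Algorithm~\ref{alg:alg1}) produces a $\tilde\alpha$ that is measurable with respect to $\mathcal{Z}$. Once this is in place the argument is a direct translation of the classical conformal exchangeability proof into the weighted, data-dependent-score setting.
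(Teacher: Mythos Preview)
Your proposal is correct and essentially matches the paper's proof: the paper derives this as the $w\equiv 1$ case of Theorem~\ref{thm:localQuantile}, using Lemma~\ref{lem:conditional} for the conditioning-on-the-multiset step (your displayed identity) and Lemma~\ref{lem:infequiv} for the passage from $\hat\mF^{Z_{n+1}}_{n+1}$ to $\hat\mF^{Z_{n+1}}$ (your monotonicity step, though the paper records the full if-and-only-if). Your identification of the main obstacle---that $\tilde\alpha$ must depend only on the unordered multiset so that $v^*_i$ transforms covariantly---is exactly the point the paper verifies before invoking Lemma~\ref{lem:conditional}.
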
 
\begin{remark}
\label{remark1}
Same as in Proposition \ref{prop:equivalence}, when  $H_{i,j} = 1$, we have $\hat \mF^{Z_{n+1}} = V^{Z_{n+1}}_{1:n}\cup \{\infty\}$, and $v^*_i = Q(\tilde\alpha; V^{Z_{n_1}}_{1:(n+1)})$, $\forall i = 1,\ldots, n+1$. Since eq.(\ref{eq:goal3}) holds for  if and only if $v^*_i \geq Q(\alpha; V^{Z_{n_1}}_{1:(n+1)})$. We recovered the conformal inference result \citep{vovk2005algorithmic}: $\bP\left\{V^{Z_{n+1}}_{n+1} \leq Q(\alpha; V^{Z_{n+1}}_{1:n}\cup \{\infty\} ) \right\}\geq \alpha$.
\end{remark}
Corollary \ref{cor:generalC} is a direct application of  Theorem\ref{thm:general1}.
\begin{corollary}
\label{cor:generalC} 
In the setting of Theorem \ref{thm:general1}, let $ z_{n+1} = (X_{n+1}, y)$, and let $\tilde\alpha(y)$ be values indexed by $y$. Let $\hC(X_{n+1}) \coloneqq \{y:V^{z_{n+1}}_{n+1} \leq Q(\tilde\alpha(y);\hat \mF^{z_{n+1}})\}$. If $\tilde\alpha(y)$ satisfies eq.\ (\ref{eq:goal3}) at $Z_{n+1} =z_{n+1}$, we have $\bP\left\{Y_{n+1}\in  \hC(X_{n+1})\right\}\geq \alpha$.
\end{corollary}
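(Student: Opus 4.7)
The plan is to recognize Corollary \ref{cor:generalC} as essentially an application of Theorem \ref{thm:general1} at the (random) level $\tilde\alpha(Y_{n+1})$, so the proof should fit in a short paragraph. First I would rewrite the event of interest: by the definition of $\hC(X_{n+1})$, plugging in $y = Y_{n+1}$ (so that $z_{n+1} = (X_{n+1}, Y_{n+1}) = Z_{n+1}$) gives the identity
\[
\{Y_{n+1} \in \hC(X_{n+1})\} \;=\; \bigl\{V^{Z_{n+1}}_{n+1} \leq Q\bigl(\tilde\alpha(Y_{n+1});\,\hat \mF^{Z_{n+1}}\bigr)\bigr\}.
\]
So the goal reduces to showing that the right-hand event has probability at least $\alpha$.

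Next I would verify that the hypothesis of Theorem \ref{thm:general1} is fulfilled when we use $\tilde\alpha \coloneqq \tilde\alpha(Y_{n+1})$. The assumption of the corollary is precisely that, for each candidate $y$, the choice $\tilde\alpha(y)$ makes eq.\ (\ref{eq:goal3}) hold at $z_{n+1}=(X_{n+1},y)$. Evaluating this at the true response $y = Y_{n+1}$ gives $z_{n+1} = Z_{n+1}$, so the inequality $\sum_{i=1}^{n+1}\tfrac{1}{n+1}\mathbbm{1}_{V^{Z_{n+1}}_i \leq v^*_i}\geq \alpha$ holds (almost surely) with $v^*_i = Q(\tilde\alpha(Y_{n+1});\hat \mF^{Z_{n+1}}_i)$. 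Theorem \ref{thm:general1} therefore delivers
\[
\bP\bigl\{V^{Z_{n+1}}_{n+1} \leq Q\bigl(\tilde\alpha(Y_{n+1});\,\hat \mF^{Z_{n+1}}_{n+1}\bigr)\bigr\}\geq \alpha,
\]
and the second assertion of that theorem upgrades $\hat \mF^{Z_{n+1}}_{n+1}$ to $\hat \mF^{Z_{n+1}}$, yielding the claim through the identity above.

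The only potentially subtle point, and the one I would check carefully, is that Theorem \ref{thm:general1} is being invoked with a \emph{data-dependent} level $\tilde\alpha(Y_{n+1})$ rather than a deterministic constant. This is legitimate because Theorem \ref{thm:general1}'s hypothesis (\ref{eq:goal3}) is itself a pointwise condition on the realized data: once the inequality holds at the realized $Z_{n+1}$, the exchangeability-based proof of Theorem \ref{thm:general1} applies verbatim, since it only needs $\tilde\alpha$ to be a function of the (unordered) realized sample. If I were concerned about this step I would trace through the proof of Theorem \ref{thm:general1} to confirm that it handles an $\tilde\alpha$ measurable with respect to the data; no separate argument should be required. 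Hence $\bP\{Y_{n+1}\in \hC(X_{n+1})\}\geq \alpha$ follows immediately.
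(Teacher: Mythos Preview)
Your proposal is correct and matches the paper's own treatment: the paper simply states that Corollary~\ref{cor:generalC} ``is a direct application of Theorem~\ref{thm:general1},'' and you have spelled out exactly that application, including the key identification $\{Y_{n+1}\in\hC(X_{n+1})\}=\{V^{Z_{n+1}}_{n+1}\le Q(\tilde\alpha(Y_{n+1});\hat\mF^{Z_{n+1}})\}$. Your remark about the data-dependent level is also on point: the proof of Theorem~\ref{thm:general1} (through Lemma~\ref{lem:conditional}) indeed only requires $\tilde\alpha$ to be a function of the unordered set $\{Z_1,\ldots,Z_{n+1}\}$, and since the constraint~(\ref{eq:goal3}) is itself permutation-invariant, any rule selecting $\tilde\alpha(y)$ from that constraint yields such a symmetric $\tilde\alpha(Y_{n+1})$.
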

Same as in the setting with fixed score function, Theorem \ref{thm:general2} says that if we take a random decision rule to get rid of the rounding issue in Theorem \ref{thm:general1}, then the resulting randomized decision rule will be tight.
\begin{theorem}
\label{thm:general2} 
In the setting of Theorem \ref{thm:general1}, for any $\alpha \in (0,1)$, let $\tilde\alpha_1$ be the smallest value of $\tilde\alpha$ such that $ \sum^{n+1}_{i=1}\frac{1}{n+1}\mathbbm{1}_{V^{Z_{n+1}}_i \leq v^*_i}  \geq \alpha$,  and let $\tilde\alpha_2$ be the largest of $\tilde\alpha$ such that $\sum^{n+1}_{i=1}\frac{1}{n+1}\mathbbm{1}_{V^{Z_{n+1}}_i \leq v^*_i}  < \alpha$. Let $\alpha_1$, $\alpha_2$ be the values of  $ \sum^{n+1}_{i=1}\frac{1}{n+1}\mathbbm{1}_{V^{Z_{n+1}}_i \leq v^*_i}$  attained at $\tilde{\alpha}_1$, $\tilde\alpha_2$, and let $\tilde\alpha = \left\{\begin{array}{cc}\tilde \alpha_1&w.p.\;\frac{\alpha - \alpha_2}{\alpha_1-\alpha_2}\\
\tilde\alpha_2 & w.p.\;\frac{\alpha_1 - \alpha}{\alpha_1-\alpha_2}
\end{array}\right.$. Then, we have $\bP\left\{V_{n+1} \leq Q(\tilde\alpha;\hat \mF ) \right\}= \alpha$.
\end{theorem}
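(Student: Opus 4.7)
My plan is to recycle the permutation-symmetry argument behind Theorem \ref{thm:general1}, but to track the conditional coverage as an \emph{equality} rather than an inequality, and then average over the independent randomization between $\tilde\alpha_1$ and $\tilde\alpha_2$.

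First, I would condition on the unordered multiset $\mathcal{Z} := \{Z_1, \ldots, Z_{n+1}\}$. Because the score map $V(\cdot, Z)$, the localizer weights $p^H_{i,j}$, and hence each quantile $v^*_i(\tilde\alpha) = Q(\tilde\alpha;\hat\mF^{Z_{n+1}}_i)$ are invariant under joint relabelings of the samples, the count $N(\tilde\alpha) := \sum_{i=1}^{n+1}\mathbbm{1}_{V^{Z_{n+1}}_i \leq v^*_i(\tilde\alpha)}$ is a symmetric functional of $\mathcal{Z}$, and thus $\tilde\alpha_1, \tilde\alpha_2, \alpha_1, \alpha_2$ are all $\sigma(\mathcal{Z})$-measurable. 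The i.i.d.\ assumption gives that, conditional on $\mathcal{Z}$, the label of the test sample is uniform over $\{1,\ldots,n+1\}$, so for every fixed $\tilde\alpha$,
\begin{equation}
\bP\{V^{Z_{n+1}}_{n+1} \leq v^*_{n+1}(\tilde\alpha) \mid \mathcal{Z}\} \;=\; \frac{1}{n+1}\sum_{i=1}^{n+1}\mathbbm{1}_{V^{Z_{n+1}}_i \leq v^*_i(\tilde\alpha)} \;=\; \frac{N(\tilde\alpha)}{n+1}. \notag
\end{equation}

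Applying this identity at $\tilde\alpha=\tilde\alpha_1$ and $\tilde\alpha=\tilde\alpha_2$ gives conditional coverages of exactly $\alpha_1$ and $\alpha_2$, respectively. Since the mixing coin between $\tilde\alpha_1$ and $\tilde\alpha_2$ is drawn independently of $Z_{1:n+1}$, the conditional coverage of the randomized rule is
\begin{equation}
\frac{\alpha-\alpha_2}{\alpha_1-\alpha_2}\,\alpha_1 \;+\; \frac{\alpha_1-\alpha}{\alpha_1-\alpha_2}\,\alpha_2 \;=\; \alpha, \notag
\end{equation}
and taking expectation over $\mathcal{Z}$ delivers the claimed exact coverage.

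The principal obstacle is bookkeeping the boundary cases. When $N(\tilde\alpha)/(n+1)$ already exceeds $\alpha$ at $\tilde\alpha=0$, or never reaches $\alpha$ before $\tilde\alpha=1$, one of $\tilde\alpha_1,\tilde\alpha_2$ is undefined, and one must extend by the conventions $(\tilde\alpha_2,\alpha_2)=(0,0)$ or $(\tilde\alpha_1,\alpha_1)=(1,1)$ and check that the mixture still yields conditional coverage $\alpha$. A secondary subtlety is passing from $Q(\tilde\alpha;\hat\mF^{Z_{n+1}}_{n+1})$---which exchangeability controls directly---to the quantile $Q(\tilde\alpha;\hat\mF^{Z_{n+1}})$ in the statement that replaces $V^{Z_{n+1}}_{n+1}$ by $\infty$; these two quantiles differ only when $V^{Z_{n+1}}_{n+1}$ sits at the critical weighted rank, and the argument needs to verify that the jump in $N(\tilde\alpha)/(n+1)$ corresponding to this rank is precisely what the randomization between $\tilde\alpha_1$ and $\tilde\alpha_2$ interpolates, so that equality (rather than $\geq\alpha$) is preserved.
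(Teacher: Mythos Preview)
Your proposal is correct and follows essentially the same route as the paper: condition on the unordered sample set (the paper's event $\mathcal{T}$), use exchangeability to express conditional coverage as the symmetric average $N(\tilde\alpha)/(n+1)$ (packaged in the paper as Lemma~\ref{lem:conditional}), average over the independent coin, and marginalize. Your second ``subtlety'' is simpler than you fear: the paper's Lemma~\ref{lem:infequiv} shows that $\{V_{n+1}\le Q(\tilde\alpha;\hat\mF_{n+1})\}$ and $\{V_{n+1}\le Q(\tilde\alpha;\hat\mF)\}$ are the \emph{same} event for every $\tilde\alpha$, so equality transfers with no additional bookkeeping.
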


We do not show experiments with the data-dependent score function because this general recipe described in Theorem \ref{thm:general1} is too computationally expensive: for every $y$, we need to retrain our prediction model to get $V^{z_{n+1}}(x)$ and then re-calculate $v^*_i$ and $\tilde \alpha(y)$.  Similar problems are  encountered by the conformal prediction for data dependent score function $V(., Z)$. We include the general setting here for the sake of completeness and show that the idea of localized conformal inference can be extended to the regime where the idea of  conformal prediction also works.

\section{Discussion}
In this paper, we have described a new way perform conformal prediction with localization. This localized conformal prediction approach allows us to focus on a local region around a given test  sample, and have finite sample coverage guarantee without distributional assumptions on $Y|X$.  Besides constructing more localized assumption-free confidence intervals, another interesting application would be to apply the idea of localized conformal prediction under the presence of outliers. Conformal inference has been used in classification problems  for outlier detection \cite{hechtlinger2018cautious,guan2019prediction}. Localized conformal inference with distance-based localizer seems to be an useful framework for making prediction in the presence of outliers, for both regression and classification problems if we can find a proper localizer. Compared with most other outlier detection approaches \cite{hodge2004survey,chandola2009anomaly}, it can use information from the response, since the degree of localization will depend on the distribution of  prediction errors.

\appendix
In Appendix \ref{app:covariateShift}, we generalize the localized conformal prediction to the setting where there might be covariate shift. We compare localized\&covariate shift conformal prediction with the conformal prediction construction under covariate shift\cite{barber2019conformal} and show that the localized version could lead to more desirable construction when the test samples are very different from the training samples. In Appendix \ref{app:proofs}, we provide proofs to all Theorems\slash Propositions in the main paper and in this Supplement. In Appendix \ref{app:choiceH}, we provide details of  picking the ``bandwidth"  $h$ adaptively and automatically for the localizer to achieve narrow and stable confidence bands, and a demonstrative example of applying the localized conformal prediction to high dimensional data.

\section{Localized conformal prediction under covariate shift }
\label{app:covariateShift}
When there is potential covariate-shift, we assume the training and test data can be generated from different distributions in their feature space \citep{shimodaira2000improving, sugiyama2005input, sugiyama2007covariate,quionero2009dataset}:
\begin{align*}
Z_{n+1} \sim \tilde P= \tilde P_X \times P_{Y|X},\;Z_i \overset{i.i.d}{\sim} P = P_X\times P_{Y|X}, i = 1,\ldots, n.
\end{align*}
The distribution of $Y|X$ is still assumed to be the same for the training and test samples. The work of \cite{barber2019conformal} extends conformal inference to this setting. Assuming that $\tilde P_X$ is absolutely continuous with respect to $P_X$, with known $w(x) = \frac{d P_X}{d \tilde P_X}$, we can perform  conformal inference using weighted exchangeability. 
\begin{proposition}[\citet{barber2019conformal}]
\label{prop:prop2}
Let $p_i = \frac{w(X_i)}{\sum^{n+1}_{i=1} w(X_i)}$. For any $\alpha$, we have
\[
P(V^{Z_{n+1}}_{n+1} \leq Q(\alpha; \sum^n_{i=1}p_i\delta_{V^{Z_{n+1}}_i}+ p_{n+1}\delta_{\infty} \}) \geq\alpha.
\]
\end{proposition}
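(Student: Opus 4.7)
The plan is to reduce the proposition to the weighted-exchangeability calculation that appears in Tibshirani et al.\ and the covariate-shift work of Barber et al. The key structural observation is that although $(Z_1,\ldots,Z_{n+1})$ are not exchangeable (since $Z_{n+1}\sim\tilde P$ while the others are from $P$), the score vector $(V^{Z_{n+1}}_1,\ldots,V^{Z_{n+1}}_{n+1})$ depends on the sample only through (i) the unordered multi-set $\mathcal{E}:=\{Z_1,\ldots,Z_{n+1}\}$ and (ii) the identity of the ``test'' index, because $V(\,\cdot\,,Z)$ is permutation-invariant in its second argument. So the natural move is to condition on $\mathcal{E}$ and compute the conditional law of which element plays the role of the test point.

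The core step is then the density ratio calculation. Writing the joint density of $(Z_1,\ldots,Z_{n+1})$ with respect to a product dominating measure factors as $\prod_{i=1}^n dP(Z_i)\cdot d\tilde P(Z_{n+1})$; using the common $P_{Y|X}$ factor, the ratio between the density at any permutation $\sigma$ of a given unordered tuple and the density at the identity permutation depends only on which element is sent to coordinate $n+1$, and equals the density ratio $w$ evaluated at that element (up to normalization). Consequently, conditional on $\mathcal{E}$, the probability that any particular element $z\in\mathcal{E}$ is the actual test sample is proportional to $w(z)$; i.e.\ the ``test'' index $I$ has $\bP(I=i\mid\mathcal{E})=p_i$. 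Because $V^{Z_{n+1}}_i$ is a deterministic function of $\mathcal{E}$ and the label $i$, this yields
\[
\bP\bigl(V^{Z_{n+1}}_{n+1}\leq t\,\big|\,\mathcal{E}\bigr)\;=\;\sum_{i=1}^{n+1}p_i\,\mathbbm{1}\{V^{Z_{n+1}}_i\leq t\}
\]
for any $\mathcal{E}$-measurable threshold $t$.

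The last step plugs in $t=Q\bigl(\alpha;\sum_{i=1}^n p_i\delta_{V^{Z_{n+1}}_i}+p_{n+1}\delta_\infty\bigr)$, which is $\mathcal{E}$-measurable. By definition of the weighted quantile, $\sum_{i=1}^n p_i\mathbbm{1}\{V^{Z_{n+1}}_i\leq t\}+p_{n+1}\mathbbm{1}\{\infty\leq t\}\geq\alpha$; the $\infty$ indicator vanishes when $t<\infty$ (and the claim is trivial otherwise), so $\sum_{i=1}^{n+1}p_i\mathbbm{1}\{V^{Z_{n+1}}_i\leq t\}\geq\alpha$, and the conditional bound of the previous display yields $\bP(V^{Z_{n+1}}_{n+1}\leq t\mid\mathcal{E})\geq\alpha$. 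Marginalizing over $\mathcal{E}$ gives the proposition.

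The only delicate point is the conditional-law computation in the middle paragraph: one must justify that conditioning on the unordered multi-set is well posed (needing e.g.\ no atoms, or equivalently absolute continuity with the $P_{Y|X}$ factors properly canceling in the ratio) and that the $w(z)$ factor alone survives after normalization. Once this weighted-exchangeability statement for the test index is in hand, the remainder of the argument is a direct manipulation of the weighted quantile analogous to the standard conformal proof recovered as $w\equiv1$.
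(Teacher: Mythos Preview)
Your argument is correct and follows the standard weighted-exchangeability proof from the cited reference. Note that the paper does not actually prove this proposition: it is quoted from \citet{barber2019conformal} and used as a black box. That said, the paper's Lemma~\ref{lem:conditional} carries out essentially the same conditional computation you describe (condition on the unordered set $\mathcal{T}$, compute $\bP(\sigma_{n+1}=i\mid\mathcal{T})=w(x_i)/\sum_j w(x_j)$ by counting permutations, then marginalize), just in the more general localized setting; specializing that lemma to $H\equiv1$ recovers exactly your argument.

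One small remark: your caveat about needing ``no atoms'' for the conditioning to be well posed is unnecessary. The permutation-counting formulation in the paper's Lemma~\ref{lem:conditional} (and in the original references) sidesteps this entirely---one conditions on the event that the multiset equals a given multiset and computes the conditional law of the assignment by enumerating the $n!$ orderings with $\sigma_{n+1}=i$, with no regularity on $P$ required beyond the absolute continuity already assumed for $w$.
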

Knowing the density ratio function $w(x)$, we can generalize localized conformal inference to take into consideration the covariate shift in a straightforward manner: both Theorem \ref{thm:localQuantile} and Theorem \ref{thm:localQuantileFix} consider this general case.  
\begin{assumptions}
\label{ass:ass1}
The samples are independently generated and the distributions of the training samples and the test sample can be different due to covariance-shift:
\begin{align*}
&Z_{n+1}\sim \tilde P = \tilde P_X\times P_{Y|X},\;\;Z_i \sim P = P_X\times P_{Y|X},\;\forall i = 1,2,\ldots, n\\
\end{align*}
\end{assumptions}
\begin{assumptions}
\label{ass:ass2}
$\tilde P_X$ is absolute continuous with respect to $P_X$, with  $w(x) = \frac{d \tilde P_X(x)}{d P_X(x)}$.
\end{assumptions}
We consider $w(x)$ to be known. When $w(x) = 1$, $\forall x\in \real^p$, we return to the i.i.d data setting.
\begin{theorem} 
\label{thm:localQuantile}
Suppose that Assumptions \ref{ass:ass1} -  \ref{ass:ass2}  hold. For any  $\tilde\alpha$, define  $v^*_{i} =  Q(\tilde\alpha; \hat \mF^{Z_{n+1}}_i), i = 1,\ldots, n+1$. If 
\begin{equation}
\label{eq:goal1w}
\sum^{n+1}_{i=1}\frac{w(X_i)}{\sum^{n+1}_{j=1} w(X_j)}\mathbbm{1}_{V^{Z_{n+1}}_i \leq v^*_i}  \geq \alpha  \tag{$G1^w$}
\end{equation}
Then  $P(V^{Z_{n+1}}_{n+1} \leq  Q(\tilde\alpha;\hat \mF_{n+1}^{Z_{n+1}}) \geq \alpha$, and thus,  $P(V^{Z_{n+1}}_{n+1} \leq  Q(\tilde\alpha;\hat \mF^{Z_{n+1}}) \geq \alpha$.
\end{theorem}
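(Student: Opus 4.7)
The plan is to reduce Theorem \ref{thm:localQuantile} to the argument that gave Theorem \ref{thm:general1}, replacing ordinary exchangeability by weighted exchangeability. The key ingredient, already invoked for Proposition \ref{prop:prop2}, is that under Assumptions \ref{ass:ass1}--\ref{ass:ass2}, conditional on the unordered multiset $E = \{Z_1,\ldots,Z_{n+1}\}$, the index labeling the test sample is randomized with weights $p_i = w(X_i)/\sum_{j=1}^{n+1} w(X_j)$; equivalently, $\bP(Z_{n+1} = z_i \mid E) = p_i$ where $z_1,\ldots,z_{n+1}$ are the elements of $E$.

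First I would verify that every quantity appearing in (\ref{eq:goal1w}) is a function of $(i, E)$ alone. The localizer $H(x_1,x_2,X)$ takes $X$ as an unordered argument by definition, so each weight $p^H_{i,j}$ depends only on $E$. The data-dependent score $V(\cdot, Z)$ is also required to depend on $Z$ only as a set. Hence $V^{Z_{n+1}}_i$, $\hat{\mF}^{Z_{n+1}}_i$, and $v^*_i = Q(\tilde\alpha; \hat{\mF}^{Z_{n+1}}_i)$ are all determined by $E$ and the label $i$; and $\tilde\alpha$ itself may be taken to be any function of $E$ for which (\ref{eq:goal1w}) holds.

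Next I would compute the conditional coverage probability given $E$. Because $Z_{n+1}$ equals $z_i$ with probability $p_i$ and the events $\{V^{Z_{n+1}}_{n+1} \leq Q(\tilde\alpha; \hat{\mF}^{Z_{n+1}}_{n+1})\}$ read off as $\{V(z_i, E) \leq v^*_i(E)\}$ under that relabeling,
\[
\bP\!\left(V^{Z_{n+1}}_{n+1} \leq Q(\tilde\alpha; \hat{\mF}^{Z_{n+1}}_{n+1}) \,\Big|\, E\right)
= \sum_{i=1}^{n+1} p_i \, \mathbbm{1}_{V^{Z_{n+1}}_i \leq v^*_i}.
\]
Hypothesis (\ref{eq:goal1w}) asserts exactly that the right-hand side is $\geq \alpha$ almost surely, and taking expectation over $E$ yields the first conclusion. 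The second conclusion, with $\hat{\mF}^{Z_{n+1}}$ in place of $\hat{\mF}^{Z_{n+1}}_{n+1}$, then follows immediately because replacing the $(n+1)$-th mass point by $\infty$ can only push the quantile upward, i.e.\ $Q(\tilde\alpha; \hat{\mF}^{Z_{n+1}}) \geq Q(\tilde\alpha; \hat{\mF}^{Z_{n+1}}_{n+1})$.

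The main obstacle is formalizing the weighted-exchangeability reduction: one has to be explicit that $E$ is a sufficient statistic on which all randomness beyond the index-assignment of the test point is built, and that the labeling convention used to define $v^*_i$ is invariant under the weighted relabeling. Once these two points are carefully stated, the counting argument of Theorem \ref{thm:general1} transfers verbatim, with the uniform mass $1/(n+1)$ replaced by the weighted mass $p_i$, and no further rounding or quantile manipulation beyond the standard $\infty$-substitution step is needed.
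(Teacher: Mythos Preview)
Your proposal is correct and follows essentially the same route as the paper: condition on the unordered multiset, use weighted exchangeability to identify the conditional coverage probability with the weighted sum in (\ref{eq:goal1w}), and then marginalize. The paper packages the conditioning step into a standalone lemma (their Lemma~\ref{lem:conditional}) and uses an if-and-only-if statement (Lemma~\ref{lem:infequiv}) for the $\infty$-substitution, whereas you inline the conditioning and use the one-sided monotonicity $Q(\tilde\alpha;\hat\mF^{Z_{n+1}})\geq Q(\tilde\alpha;\hat\mF^{Z_{n+1}}_{n+1})$, which suffices here; these are packaging differences, not a different argument.
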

Corollary \ref{cor:invert}  is a direct application of Theorem \ref{thm:localQuantile}.
\begin{corollary}
\label{cor:invert} 
In the setting of Theorem \ref{thm:localQuantile}, let $z_{n+1} = (X_{n+1}, y)$, and  $\tilde\alpha(y)$ be any value that satisfies eq.\ (\ref{eq:goal1w}) when $V_{n+1} = V(z_{n+1})$. Let  $\hC(X_{n+1}) \coloneqq \{y:V^{z_{n+1}}_{n+1} \leq Q(\tilde\alpha(y); \hat \mF^{z_{n+1}} )\}$. Then $P(y\in \hC(X_{n+1}) ) \geq \alpha$.
\end{corollary}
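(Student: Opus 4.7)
The plan is to invoke Theorem \ref{thm:localQuantile} pointwise at $y = Y_{n+1}$, reducing the corollary to a sample-wise inversion of the quantile inequality. First, I would observe that when we plug in $y = Y_{n+1}$ into the definition of $\hC(X_{n+1})$, the value $z_{n+1} = (X_{n+1}, Y_{n+1})$ is exactly $Z_{n+1}$, so that $V^{z_{n+1}}_{n+1} = V^{Z_{n+1}}_{n+1}$ and $\hat\mF^{z_{n+1}} = \hat\mF^{Z_{n+1}}$. Consequently the event $\{Y_{n+1}\in \hC(X_{n+1})\}$ is identical to the event $\{V^{Z_{n+1}}_{n+1}\leq Q(\tilde\alpha(Y_{n+1}); \hat\mF^{Z_{n+1}})\}$.

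Next, by the hypothesis on $\tilde\alpha(y)$, at the realization $y = Y_{n+1}$ the condition (G1\textsuperscript{w}) is satisfied with $V_{n+1} = V(Z_{n+1})$. Theorem \ref{thm:localQuantile} then directly yields
\[
\bP\left\{V^{Z_{n+1}}_{n+1}\leq Q(\tilde\alpha(Y_{n+1}); \hat\mF^{Z_{n+1}})\right\} \geq \alpha,
\]
which is precisely the stated coverage bound.

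The only delicate point, and the place I would pause to spell out the bookkeeping, is that $\tilde\alpha(Y_{n+1})$ is a data-dependent choice rather than a deterministic scalar, whereas Theorem \ref{thm:localQuantile} is phrased ``for any $\tilde\alpha$.'' I would clarify this by emphasizing that the guarantee of Theorem \ref{thm:localQuantile} is really a sample-wise statement: whenever (G1\textsuperscript{w}) holds on the realized data $(Z_1,\ldots,Z_{n+1})$, the inequality $V^{Z_{n+1}}_{n+1} \leq Q(\tilde\alpha; \hat\mF^{Z_{n+1}})$ follows on that realization, and the probability bound is obtained by taking expectations over the weighted exchangeability structure. Since we have arranged $\tilde\alpha(Y_{n+1})$ so that (G1\textsuperscript{w}) holds at every realization of the data, plugging this data-dependent choice into the inequality preserves the $\alpha$-level bound. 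Beyond this routine verification, no substantive obstacle remains, and the corollary is really just the inversion of Theorem \ref{thm:localQuantile} from a quantile statement about $V$ into a coverage statement about $Y_{n+1}$.
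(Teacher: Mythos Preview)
Your plan is correct and mirrors the paper's own treatment, which simply declares the corollary a ``direct application of Theorem~\ref{thm:localQuantile}'' without further argument; steps one and two are exactly right.

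Your third paragraph, however, contains a misstatement worth correcting. It is \emph{not} true that ``whenever (\ref{eq:goal1w}) holds on the realized data, the inequality $V^{Z_{n+1}}_{n+1}\le Q(\tilde\alpha;\hat\mF^{Z_{n+1}})$ follows on that realization.'' Condition (\ref{eq:goal1w}) only asserts that a weighted average of indicators over all indices is at least $\alpha$; it says nothing about the specific index $n+1$ on any fixed sample path (indeed, if it did, the coverage would be $1$, not $\alpha$). The right way to dispatch the data-dependence of $\tilde\alpha(Y_{n+1})$ is the one already present in the proof of Theorem~\ref{thm:localQuantile}: the left-hand side of (\ref{eq:goal1w}) is invariant under permutations of $\{1,\ldots,n+1\}$, so the \emph{set} of admissible $\tilde\alpha$ depends only on the unordered collection $\{Z_1,\ldots,Z_{n+1}\}$. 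Any deterministic selection from this set (for instance the smallest admissible value) therefore yields a $\tilde\alpha(Y_{n+1})$ that is a function of the multiset $Z$ alone, which is exactly the dependence permitted by Lemma~\ref{lem:conditional}. The conditional-then-marginal argument of Theorem~\ref{thm:localQuantile} then applies verbatim.
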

Theorem \ref{thm:localQuantile} provides a way to choose $\tilde\alpha$ with guaranteed coverage, by considering localizers centered at  each of the samples to restore exchangeability. Theorem \ref{thm:tightness} says that if we take a random decision rule to get rid of the rounding issue, we can have an algorithm with tight coverage.
\begin{theorem} 
In the setting of Theorem \ref{thm:localQuantile}, for any $\alpha\in (0,1)$, let $\tilde\alpha_1$ be the smallest value of $\tilde\alpha$ such that $ \sum^{n+1}_{i=1}\frac{w(X_i)}{\sum^{n+1}_{j=1} w(X_j)}\mathbbm{1}_{V^{Z_{n+1}}_i \leq v^*_i}  \geq \alpha$,  and  let $\tilde\alpha_2$ be the largest value of $\tilde\alpha$ such that $\sum^{n+1}_{i=1}\frac{w(X_i)}{\sum^{n+1}_{j=1} w(X_j)}\mathbbm{1}_{V^{Z_{n+1}}_i \leq v^*_i}  < \alpha$. Let $\alpha_1$, $\alpha_2$ be the values of  $ \sum^{n+1}_{i=1}\frac{w(X_i)}{\sum^{n+1}_{j=1} w(X_j)}\mathbbm{1}_{V^{Z_{n+1}}_i \leq v^*_i}$  attained at $\tilde{\alpha}_1$, $\tilde\alpha_2$, and let $\tilde\alpha = \left\{\begin{array}{cc}\tilde \alpha_1&w.p.\;\frac{\alpha - \alpha_2}{\alpha_1-\alpha_2}\\
\tilde\alpha_2 & w.p.\;\frac{\alpha_1 - \alpha}{\alpha_1-\alpha_2}
\end{array}\right.$.  Then, $\bP\left\{V^{Z_{n+1}}_{n+1} \leq Q(\tilde\alpha;\hat \mF^{Z_{n+1}} ) \right\}= \alpha$.
\label{thm:tightness}
\end{theorem}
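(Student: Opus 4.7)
The plan is to extend the randomization argument that proves Theorem \ref{thm:general2} (the i.i.d.\ analog) to the covariate-shift setting by replacing plain exchangeability with the weighted exchangeability that underlies Proposition \ref{prop:prop2}. The core observation is that, conditionally on the unordered multiset of samples, the randomized choice between $\tilde\alpha_1$ and $\tilde\alpha_2$ realizes a convex combination of two deterministic coverage values $\alpha_1$ and $\alpha_2$ that precisely averages to $\alpha$.

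I would carry out the argument in four steps. First, condition on the unordered multiset $E = \{Z_1,\ldots, Z_{n+1}\}$; under Assumptions \ref{ass:ass1}--\ref{ass:ass2}, the conditional law of the test-sample index is weighted by $w$, so $\bP(Z_{n+1} = z_k \mid E) = w(X_k)/\sum_j w(X_j)$. Second, note that the localizer $H$ and the density-ratio $w$ are symmetric in sample labels, so the weights $p^H_{i,j}$, the scores $V^{Z_{n+1}}_i$, the thresholds $v^*_i(\tilde\alpha)$, and consequently $\tilde\alpha_1,\tilde\alpha_2,\alpha_1,\alpha_2$ are all measurable with respect to $E$. Third, establish the key conditional-probability identity: for any $\tilde\alpha$ that is a function of $E$,
\begin{equation*}
\bP\bigl(V^{Z_{n+1}}_{n+1} \leq Q(\tilde\alpha;\hat \mF^{Z_{n+1}}) \,\big|\, E\bigr) \;=\; \sum_{k=1}^{n+1}\frac{w(X_k)}{\sum_j w(X_j)}\mathbbm{1}_{V_k \leq v^*_k(\tilde\alpha)}.
\end{equation*}
The right-hand side enumerates the labelings that place index $k$ in the test position; the equivalence $\{V_k \leq Q(\tilde\alpha;\hat\mF^{(k)})\} = \{V_k \leq v^*_k(\tilde\alpha)\}$ under each relabeling can be justified by observing that quantile comparisons are characterized by the strict left-limit CDF, and the atom that $\hat\mF_k$ places at $V_k$ contributes only to the CDF at and strictly above $V_k$, hence leaves the left limit unchanged.

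Fourth, evaluate the identity at $\tilde\alpha = \tilde\alpha_1$ and $\tilde\alpha_2$ to read off conditional coverages $\alpha_1$ and $\alpha_2$ directly from the definitions in eq.\;(\ref{eq:goal1w}), and take the randomized mixture to obtain
\begin{equation*}
\frac{\alpha-\alpha_2}{\alpha_1-\alpha_2}\,\alpha_1 \;+\; \frac{\alpha_1-\alpha}{\alpha_1-\alpha_2}\,\alpha_2 \;=\; \alpha
\end{equation*}
conditionally on $E$; integrating over $E$ yields the claimed unconditional equality. The principal obstacle is the rigorous derivation of the conditional-probability identity in step three, because one must carefully track the Radon--Nikodym factors $w(X_k)/\sum_j w(X_j)$ through the sum over sample relabelings and verify that the event involving $\hat\mF$ (with the $\infty$ atom) reduces precisely to the $v^*_k$-based indicator. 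A secondary concern is degeneracy handling: one must check that the weighted-indicator sum is a right-continuous step function of $\tilde\alpha$ with finitely many jumps so that $\tilde\alpha_1,\tilde\alpha_2$ are well-defined and $\alpha_2 < \alpha \leq \alpha_1$ with $\alpha_1 > \alpha_2$, ensuring the randomization weights make sense.
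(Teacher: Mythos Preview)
Your proposal is correct and follows essentially the same route as the paper. The paper conditions on the unordered set $\mathcal{T}$, observes that $\tilde\alpha_1,\tilde\alpha_2,\alpha_1,\alpha_2$ are fixed given $\mathcal{T}$, invokes its Lemma~\ref{lem:conditional} (your step-three identity, stated for $\hat\mF_{n+1}^{Z_{n+1}}$ rather than $\hat\mF^{Z_{n+1}}$) to compute the conditional coverage as the weighted indicator sum, averages over the randomization to get exactly $\alpha$, marginalizes, and finally applies Lemma~\ref{lem:infequiv} to pass from $\hat\mF_{n+1}^{Z_{n+1}}$ to $\hat\mF^{Z_{n+1}}$; you instead fold the $\infty$-atom equivalence directly into the conditional identity, which is the only structural difference.
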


When the score function is fixed, we can come up with a decision rule that does not depend on $y$.
\begin{theorem}
\label{thm:localQuantileFix}
Suppose Assumption \ref{ass:ass1} - \ref{ass:ass2} hold.  Let $V(.)$ to be a fixed function. For any $\tilde\alpha$, define $\bar v^* = Q(\tilde\alpha; \hat \mF)$ and  $v^*_{i1}  = Q(\tilde\alpha; \sum^n_{j=1}p^{H}_{i,j} \delta_{V_j}+p^{H}_{i,n+1}\delta_{\bar v^*}),\;v^*_{i2} = Q(\tilde\alpha  ; \sum^n_{j=1}p^{H}_{i,j} \delta_{V_j}+p^{H}_{i,n+1}\delta_0)$. If  $\bar v^* = \infty$ or if
\begin{align*}
\label{eq:goal2w}
&\sum^{n}_{i=1}\frac{w(X_i)}{\sum^{n+1}_{j=1} w(X_j)}\mathbbm{1}_{V_i \leq v^*_{i1}}  \geq \alpha,\\
&\sum^{n}_{i=1}\frac{w(X_i)}{\sum^{n+1}_{j=1} w(X_j)}\mathbbm{1}_{V_i \leq v^*_{i2}} +\frac{w(X_{n+1})}{\sum^{n+1}_{j=1} w(X_j)}\geq \alpha .  \tag{$G2^w$}
\end{align*}
Then we have $P(V_{n+1} \leq  Q(\tilde\alpha;\hat \mF) \geq \alpha$.
\end{theorem}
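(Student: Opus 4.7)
The plan is to reduce Theorem~\ref{thm:localQuantileFix} to Theorem~\ref{thm:localQuantile} by verifying the weighted condition (\ref{eq:goal1w}) pointwise in the data. With $V(\cdot)$ fixed we have $V^{Z_{n+1}}_i = V_i$, so once (\ref{eq:goal1w}) is established for every data realization, Theorem~\ref{thm:localQuantile} yields $\bP\{V_{n+1}\le Q(\tilde\alpha;\hat \mF_{n+1})\}\ge \alpha$. Since $\hat \mF$ is obtained from $\hat \mF_{n+1}$ by raising the atom at $V_{n+1}$ to $\infty$, we always have $\bar v^* = Q(\tilde\alpha;\hat \mF)\ge Q(\tilde\alpha;\hat \mF_{n+1})$, and the target $\bP\{V_{n+1}\le \bar v^*\}\ge \alpha$ follows. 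The case $\bar v^* = \infty$ is immediate.

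Assume $\bar v^* < \infty$. The first ingredient is a monotonicity observation: for each $i\le n$, the map
\[
t\mapsto Q\Bigl(\tilde\alpha;\; \sum_{j=1}^n p^H_{i,j}\delta_{V_j} + p^H_{i,n+1}\delta_t\Bigr)
\]
is non-decreasing in $t$, since raising the atom weakly lowers the CDF below that point and thus weakly raises the $\tilde\alpha$-quantile. Writing $v^*_i$ for this quantile evaluated at the actual $t=V_{n+1}$, this yields the two useful bounds $v^*_i\ge v^*_{i2}$ whenever $V_{n+1}\ge 0$, and $v^*_i\ge v^*_{i1}$ whenever $V_{n+1}\ge \bar v^*$.

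The crucial lemma is the implication $V_{n+1}\le \bar v^*\;\Rightarrow\; V_{n+1}\le v^*_{n+1}$, proved by contrapositive. If $V_{n+1}>v^*_{n+1}$, then on the set $(-\infty,v^*_{n+1}]$ the atom of $\hat \mF_{n+1}$ at $V_{n+1}$ contributes nothing, so the CDFs of $\hat \mF_{n+1}$ and $\hat \mF$ (which places the corresponding mass at $\infty$) agree on that set. Since the CDF of $\hat \mF_{n+1}$ reaches $\tilde\alpha$ at $v^*_{n+1}$, so does that of $\hat \mF$, giving $\bar v^* \le v^*_{n+1} < V_{n+1}$.

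Putting the pieces together, (\ref{eq:goal1w}) is checked by splitting on the location of $V_{n+1}$. If $V_{n+1}>\bar v^*$, then $v^*_i\ge v^*_{i1}$ for all $i\le n$, and the first inequality of (\ref{eq:goal2w}) gives
\[
\sum_{i=1}^{n+1}\frac{w(X_i)}{\sum_j w(X_j)}\mathbbm{1}_{V_i\le v^*_i}\ge \sum_{i=1}^n\frac{w(X_i)}{\sum_j w(X_j)}\mathbbm{1}_{V_i\le v^*_{i1}}\ge \alpha,
\]
even after discarding the $i=n+1$ term. If $V_{n+1}\le \bar v^*$, then $v^*_i\ge v^*_{i2}$ for $i\le n$ and the key lemma forces $\mathbbm{1}_{V_{n+1}\le v^*_{n+1}}=1$, so the second inequality of (\ref{eq:goal2w}) yields the same bound. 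The main obstacle is the atomic argument behind the key lemma, which relies on the precise interaction between $V_{n+1}$ appearing as an atom of $\hat \mF_{n+1}$ with weight $p^H_{n+1,n+1}$ and being replaced by $\infty$ in $\hat \mF$; this is what ensures the seemingly problematic ``in-between'' regime $v^*_{n+1}<V_{n+1}\le \bar v^*$ is in fact vacuous.
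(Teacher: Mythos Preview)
Your case analysis deriving (\ref{eq:goal1w}) from (\ref{eq:goal2w}) is correct and matches the paper's Lemma~\ref{lem:bound} essentially line for line: the monotonicity of $t\mapsto Q(\tilde\alpha;\sum_{j\le n}p^H_{i,j}\delta_{V_j}+p^H_{i,n+1}\delta_t)$ and the implication $V_{n+1}\le\bar v^*\Rightarrow V_{n+1}\le v^*_{n+1}$ are exactly the two ingredients the paper uses.

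The gap is in the last step. You invoke Theorem~\ref{thm:localQuantile} as a black box once (\ref{eq:goal1w}) is verified, but the proof of Theorem~\ref{thm:localQuantile} (through Lemma~\ref{lem:conditional}) requires $\tilde\alpha$ to depend on the data only through the \emph{unordered} set $\{Z_1,\dots,Z_{n+1}\}$. The $\tilde\alpha$ produced by condition~(\ref{eq:goal2w}) is not of this form: $\bar v^*$, the quantities $v^*_{i1},v^*_{i2}$, and the second inequality all single out the index $n{+}1$, so if $\tilde\alpha$ is selected (e.g.\ by grid search as in Algorithm~\ref{alg:alg1}) to satisfy (\ref{eq:goal2w}), it changes when the roles of $Z_{n+1}$ and some $Z_i$ are swapped. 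The paper flags this explicitly (``$\tilde\alpha$ is not symmetric on the observations\ldots hence we cannot directly apply Lemma~\ref{lem:conditional}'') and closes the gap by a symmetrization: it lets $\tilde\alpha^{\ell}$ be the value obtained when $Z_\ell$ plays the role of the test point, notes that your Lemma~\ref{lem:bound}-type bound then gives (\ref{eq:goal1w}) with $\tilde\alpha^{\ell}$ for every $\ell$, and replaces $\tilde\alpha$ by the permutation-invariant quantity $\min_{\ell}\tilde\alpha^{\ell}$ before applying Lemma~\ref{lem:conditional}. Since $Q(\cdot;\hat\mF_{n+1})$ is monotone in its first argument, this still lower-bounds the original event.

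So your proof is the paper's proof minus this symmetrization step; add it and the argument is complete.
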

Corollary \ref{cor:invertFix} is a direct application of Theorem \ref{thm:localQuantileFix}.
\begin{corollary}
\label{cor:invertFix}
In the setting of Theorem \ref{thm:localQuantileFix},  let $z_{n+1} = (X_{n+1}, y)$ and $\hC(X_{n+1}) \coloneqq \{y:V(z_{n+1})\leq Q(\tilde\alpha; \hat \mF)\}$. If $\bar v^* = \infty$ or if  eq.(\ref{eq:goal2w}) holds, then we have $P(Y_{n+1}\in \hC(X_{n+1})) \geq \alpha$.
\end{corollary}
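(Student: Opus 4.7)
The plan is to derive Corollary \ref{cor:invertFix} as a direct consequence of Theorem \ref{thm:localQuantileFix}. The key observation is that with a fixed score function $V(\cdot)$, the quantile $Q(\tilde\alpha; \hat\mF)$ together with the auxiliary quantities $\bar v^*$, $v^*_{i1}$, $v^*_{i2}$ appearing in eq.(\ref{eq:goal2w}) all depend only on $X_1,\ldots,X_{n+1}$ and $V_1,\ldots,V_n$, and not on the hypothetical response value $y$. This is because $\hat\mF$ replaces the mass at $V_{n+1}$ by a mass at $\infty$, while $v^*_{i1}$ and $v^*_{i2}$ replace it by $\bar v^*$ and $0$ respectively. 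Consequently, any $\tilde\alpha$ verifying eq.(\ref{eq:goal2w}) is a deterministic function of the observed data, so there is a single well-defined cutoff that we can then invert in $y$ to obtain $\hC(X_{n+1})$.

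Given this, the first step would be to dispose of the trivial case $\bar v^* = \infty$: then $Q(\tilde\alpha; \hat\mF) = \infty$, which makes $\hC(X_{n+1}) = \real$ and gives the coverage $1 \geq \alpha$ immediately. In the nontrivial case, Theorem \ref{thm:localQuantileFix} applies directly and yields
\[
\bP\bigl(V_{n+1} \leq Q(\tilde\alpha; \hat\mF)\bigr) \geq \alpha.
\]

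The second step is to translate this coverage of the score into coverage of the response. Since $V(\cdot)$ is fixed we have $V_{n+1} = V(X_{n+1}, Y_{n+1})$, so, using that $Q(\tilde\alpha; \hat\mF)$ does not depend on the plugged-in $y$,
\[
\{V_{n+1} \leq Q(\tilde\alpha;\hat\mF)\} \;=\; \{V(X_{n+1},Y_{n+1}) \leq Q(\tilde\alpha;\hat\mF)\} \;=\; \{Y_{n+1} \in \hC(X_{n+1})\},
\]
from which $\bP(Y_{n+1} \in \hC(X_{n+1})) \geq \alpha$ follows. I do not anticipate any serious obstacle: all of the heavy lifting (the weighted-exchangeability argument, the role of the covariate-shift weights $w(X_i)$, and the worst-case sandwich between $v^*_{i1}$ and $v^*_{i2}$) is already discharged inside Theorem \ref{thm:localQuantileFix}. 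The only point deserving care in writing the proof is emphasizing the $y$-independence of the cutoff and of the selection rule for $\tilde\alpha$, which is what makes the inversion in the definition of $\hC(X_{n+1})$ legitimate rather than circular.
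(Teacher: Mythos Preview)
Your proposal is correct and matches the paper's own treatment: the paper simply states that Corollary~\ref{cor:invertFix} is ``a direct application of Theorem~\ref{thm:localQuantileFix}'' and gives no separate proof, so the details you spell out (the $y$-independence of $\bar v^*$, $v^*_{i1}$, $v^*_{i2}$ and hence of the cutoff $Q(\tilde\alpha;\hat\mF)$, followed by the event identity $\{V_{n+1}\leq Q(\tilde\alpha;\hat\mF)\}=\{Y_{n+1}\in\hC(X_{n+1})\}$) are exactly the intended ones. One minor simplification: you need not treat the case $\bar v^*=\infty$ separately, since the hypothesis of Theorem~\ref{thm:localQuantileFix} already includes it and delivers $\bP(V_{n+1}\leq Q(\tilde\alpha;\hat\mF))\geq\alpha$ directly.
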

More concretely, to accommodate to the covariate shift, we need only to consider a weighted evaluation equations in Theorem \ref{thm:split2}/Algorithm \ref{alg:alg1} (fixed score function) and Theorem \ref{thm:general1} (data-dependent score function) :
\begin{enumerate}
\item In Theorem \ref{thm:split2}//Algorithm \ref{alg:alg1}, we change eq.\ (\ref{eq:goal2}) into
\begin{align*}
&\sum^{n}_{i=1}\frac{w(X_i)}{\sum^{n+1}_{j=1} w(X_j)}\mathbbm{1}_{V_i \leq v^*_{i1}}  \geq \alpha,\\
&\sum^{n}_{i=1}\frac{w(X_i)}{\sum^{n+1}_{j=1} w(X_j)}\mathbbm{1}_{V_i \leq v^*_{i2}} +\frac{w(X_{n+1})}{\sum^{n+1}_{j=1} w(X_j)}\geq \alpha . 
\end{align*}
\item Theorem \ref{thm:general1} , we change  eq.\ (\ref{eq:goal3}) into 
\[
\sum^{n+1}_{i=1}\frac{w(X_i)}{\sum^{n+1}_{j=1} w(X_j)}\mathbbm{1}_{V^{Z_{n+1}}_i \leq v^*_i}  \geq \alpha.
\]
\end{enumerate}

Under the covariate shift, localized conformal inference may help to limit the influence of samples with extremely large  weight $w(X_i)$. If $\tilde P_X$ and $P_X$ are not close to each other, the (weighted) conformal prediction may construct a CI strongly influenced  by a few samples with extremely large $w(X_i)$, even though $X_{n+1}$ can be far from those $X_i$.

To illustrate this, let $Y_i = X_i+\epsilon_i$, with $\epsilon_i \sim N(0,1)$ for $i = 1,\ldots, n+1$, and $X_{i}\sim N(0,1)$ for $i = 1,\ldots, n$, $X_{n+1}\sim N(3, 1)$. Consider the score function $V(x, y) = |y -x|$ and let the training sample size be $n = 500$. We compare conformal prediction and localized conformal prediction, both under covariate shift. For localized conformal prediction, we use a nearest-neighbor based localizer:
\[
H(x_1, x_2, X) = w(x_2)\mathbbm{1}_{\{|w(x_2) - w(x_1)|\leq Q(\frac{h}{n+1}; \sum^{n+1}_{i=1} \delta_{|w(X_i)-w(x_1)|})\}}
\]
We let $h = 450$ to limit the influence of the training samples with extreme weights on $X_{n+1}$ far away from them. We repeat the experiment 10 times and plot the constructed confidence bands using both methods for $x \leq 2$ in  Figure \ref{fig:illustrationE2}. We overlap the localized conformal bands and  the conformal bands, and observe that localized inference leads to less volatile CIs for  test samples  in this regime. For $x>2$, localized conformal prediction can produce wider CIs and more CIs with infinity lengths compared with the conformal prediction. This might actually be desirable since there are very few training samples with $x>2$, and in practice, we could want a wide/infinity CI to characterize the lack in training samples at corresponding regions. 
\begin{figure}
\caption{\em Conformal inference (blue) and localized conformal inference with automatically chosen $h$ (red) at level $\alpha = .95$.  The localized inference leads to less volatile CIs for samples that are close to the training. }
\label{fig:illustrationE2}
\begin{center}
\includegraphics[width = .5\textwidth, height = .45\textwidth]{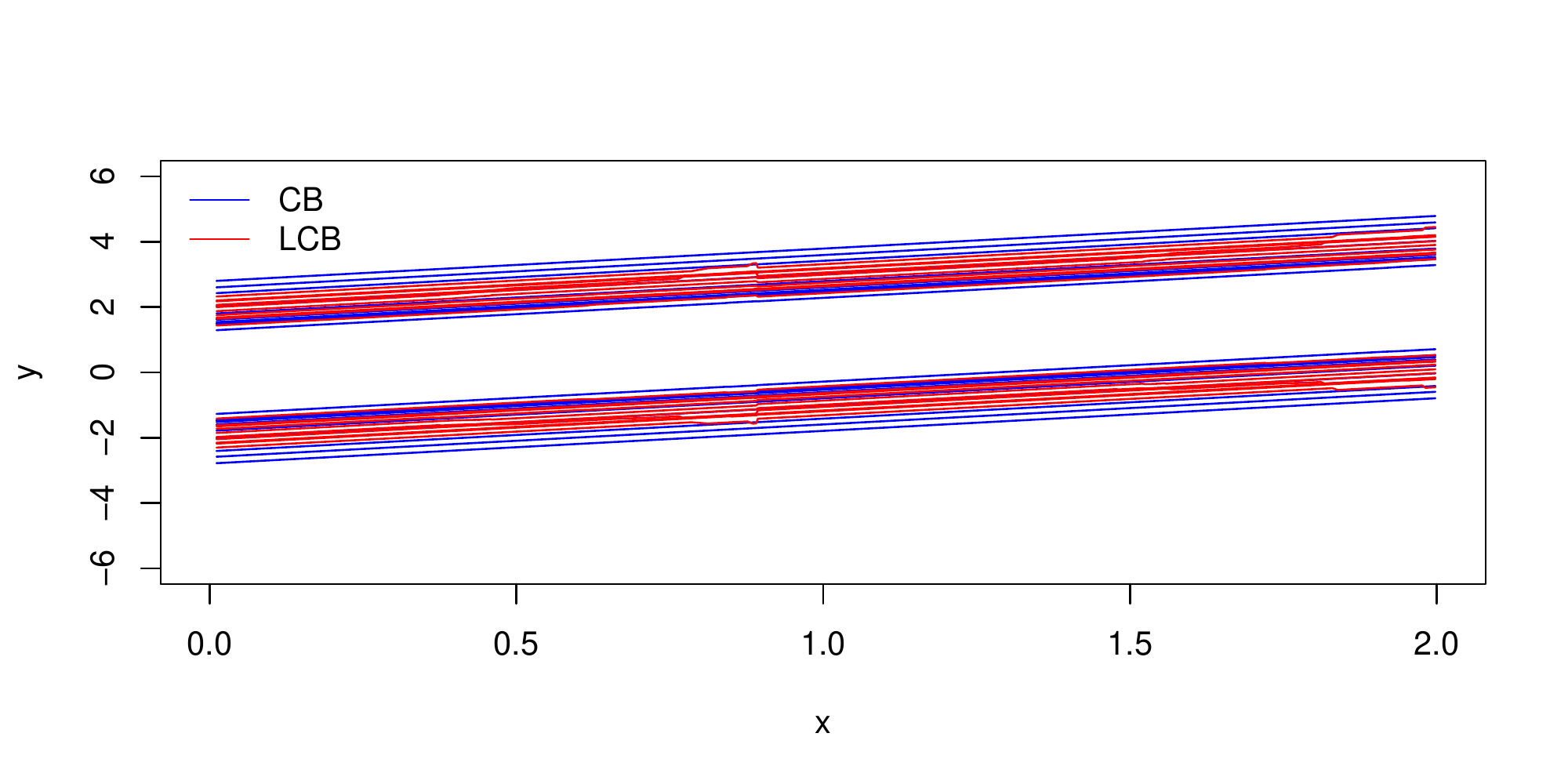} 
\end{center}
\end{figure}

\section{Proofs}
\label{app:proofs}
Theorems \ref{thm:split2}, \ref{thm:general1} and \ref{thm:general2}  are special cases of Theorems \ref{thm:localQuantileFix} ,\ref{thm:localQuantile} and \ref{thm:tightness} when $w(X) = 1$.   Hence, we will prove the later in this section.  This section is organized as following. We first give proofs to Proposition \ref{prop:equivalence}. We then give proofs to \ref{thm:localQuantile} and \ref{thm:tightness} with the help from Lemma \ref{lem:infequiv} and \ref{lem:conditional}. We prove next Theorems \ref{thm:localQuantileFix} by showing that it can guarantee the worst case scenario in Theorem \ref{thm:localQuantile} for any pre-fixed score function. We present proofs to Theorem \ref{thm:approximate} by the end of this section.
\subsection{Proof of Proposition \ref{prop:equivalence}}
\begin{proof}[Proof of Proposition \ref{prop:equivalence}]
When  $H_{i,j} = 1$ and $\tilde\alpha = \alpha$, we know $\bar v^* = Q(\alpha; V_{1:n}\cup \{\infty\})$,$v^*_{i1} = Q(\alpha; V_{1:n}\cup \{\bar v^*\})$ and $v^*_{i2} = Q(\alpha ; V_{1:n}\cup \{0\})$, $\forall i = 1,\ldots, n$.  Without loss of generality, suppose $V_1\leq V_2\leq \ldots\leq V_n$ and $\bar v^* = V_{\lceil (n+1)\alpha\rceil}$.   We show that we must have $\bar v^* = \infty$ or eq.\ (\ref{eq:goal2}). If $\bar v^* < \infty$,  then, we have $\lceil(n+1)\alpha\rceil\leq n$, and
\begin{enumerate}
\item  If  $v^*_{i,1} = v < \bar v^*$, then, $\bar v^*$ and $\{V_{\lceil (n+1)\alpha\rceil},  V_{\lceil (n+1)\alpha\rceil+1}, \ldots, V_n\}$ are both greater than $v$. Thus, $v$ is at most $\frac{ \lceil (n+1)\alpha\rceil-1}{n+1} < \alpha$ quantile of the empirical distribution $V_{1:n}\cup \{\bar v^*\}$, which is a contradiction. On the other hand, by definition of $\bar v^*$, we know
\[
\sum^{n}_{i=1}\frac{1}{n+1}\mathbbm{1}_{V_i \leq \bar v^*} +\frac{1}{n+1}\mathbbm{1}_{ \infty\leq \bar v^*} =   \sum^{n}_{i=1}\frac{1}{n+1}\mathbbm{1}_{V_i \leq \bar v^*} \geq \alpha.
\]
Hence, $\sum^{n}_{i=1}\frac{1}{n+1}\mathbbm{1}_{V_i \leq v^*_{i1}}  \geq \alpha$.
\item It is easy to check that $v^*_{i,2} = Q([\alpha-\frac{1}{n+1}]_+\frac{n}{n+1}; V_{1:n})$.  Hence,  $v^*_{i,2}$ is the $\lceil [\alpha-\frac{1}{n+1}]_+ \frac{n^2}{n+1}\rceil$ smallest  value in $\{V_1,\ldots, V_n\}$. Consequently, we have
\begin{align*}
\sum^{n}_{i=1}\frac{1}{n+1}\mathbbm{1}_{V_i \leq v^*_{i2}} +\frac{1}{n+1}&= \frac{\lceil [\alpha-\frac{1}{n+1}]_+ \frac{n^2}{n+1}\rceil+1}{n+1} \\
&\geq \frac{\lceil \tilde\alpha \frac{n^2}{n+1}+\frac{(n+1)^2-n^2}{n+1}\rceil}{n+1}\geq \frac{\lceil \alpha (n+1)\rceil}{n+1}.
\end{align*}

\end{enumerate}
Combine them together, we know that $\tilde\alpha = \alpha$ leads to $\bar v^* = \infty$ or eq.\ (\ref{eq:goal2}), and Theorem \ref{thm:split2} recovers the result that 
\[
\bP\left\{V_{n+1} \leq Q(\alpha; V_{1:n}\cup \{\infty\} ) \right\}\geq \alpha.
\]
\end{proof}

\subsection{Proofs Theorems of  \ref{thm:localQuantile} and \ref{thm:tightness}}
Lemma \ref{lem:infequiv} and Lemma \ref{lem:conditional} are important components for the proofs of Theorems of  \ref{thm:localQuantile} and \ref{thm:tightness}.
\begin{lemma}
\label{lem:infequiv}
For any $\alpha$ and sequence $\{V_1,\ldots, V_{n+1}\}$, we have
\[
V_{n+1} \leq Q(\alpha; \sum^n_{i=1} p_i \delta_{V_i} + p_{n+1}\delta_{V_{n+1}}) \Leftrightarrow V_{n+1} \leq Q(\alpha; \sum^n_{i=1} p_i \delta_{V_i} + p_{n+1}\delta_\infty),
\]
where $\sum^n_{i=1} p_i \delta_{V_i} + p_{n+1}\delta_{V_{n+1}}$ and $\sum^n_{i=1} p_i \delta_{V_i} + p_{n+1}\delta_\infty$ are some weighted empirical distributions with weights $p_i\geq 0$ and $\sum^{n+1}_{i=1} p_i = 1$.
\end{lemma}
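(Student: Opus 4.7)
My plan is to introduce the partial CDF $S(t) \coloneqq \sum_{i=1}^n p_i \mathbbm{1}_{V_i \leq t}$ and rewrite both distributions in the lemma in terms of it. Call them $F_1 \coloneqq \sum_{i=1}^n p_i \delta_{V_i} + p_{n+1}\delta_{V_{n+1}}$ and $F_2 \coloneqq \sum_{i=1}^n p_i \delta_{V_i} + p_{n+1}\delta_\infty$. Since $\delta_\infty$ contributes no mass to the CDF at any finite point, $F_2(t) = S(t)$ for finite $t$, while $F_1(t) = S(t) + p_{n+1}\mathbbm{1}_{V_{n+1} \leq t}$. The key structural observation is that $F_1$ and $F_2$ coincide on $(-\infty, V_{n+1})$ and differ by exactly $p_{n+1}$ on $[V_{n+1}, \infty)$. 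Setting $q_1 \coloneqq Q(\alpha; F_1)$ and $q_2 \coloneqq Q(\alpha; F_2)$, the claim reduces to the equivalence $V_{n+1} \leq q_1 \iff V_{n+1} \leq q_2$.

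For the forward direction, I would argue by contradiction: assume $V_{n+1} \leq q_1$ but $V_{n+1} > q_2$ (taking the case $q_2 = \infty$ as trivially giving $V_{n+1} \leq q_2$ once we allow $\infty$ values). Since $V_{n+1} > q_2$, the indicator $\mathbbm{1}_{V_{n+1}\leq q_2}$ vanishes, so $F_1(q_2) = S(q_2) = F_2(q_2) \geq \alpha$. By the infimum definition of $q_1$, this forces $q_1 \leq q_2 < V_{n+1}$, which contradicts $V_{n+1} \leq q_1$.

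For the reverse direction, assume $V_{n+1} \leq q_2$. If $q_1 \geq q_2$, we are done since $V_{n+1} \leq q_2 \leq q_1$. Otherwise $q_1 < q_2$; here I would exploit the discreteness of the weighted empirical distribution. Since $F_2$ is a right-continuous step function and $q_2$ is the smallest $t$ with $F_2(t) \geq \alpha$, we strictly have $F_2(q_1) < \alpha$, while by definition $F_1(q_1) \geq \alpha$. The gap $F_1(q_1) - F_2(q_1) = p_{n+1}\mathbbm{1}_{V_{n+1} \leq q_1}$ must then be positive, which forces $V_{n+1} \leq q_1$.

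The only subtleties worth flagging are (i) handling $q_2 = \infty$ (which arises when $\sum_{i=1}^n p_i < \alpha$) so that the inequality $V_{n+1} \leq q_2$ is interpreted in the extended reals, and (ii) justifying the strict inequality $F_2(q_1) < \alpha$ for $q_1 < q_2$, which is where the discrete/atomic nature of the empirical distribution is essential; neither obstacle is serious, but both are needed for the two-line argument above to be airtight.
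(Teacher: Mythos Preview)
Your proof is correct and follows essentially the same idea as the paper's: both arguments exploit that the two weighted CDFs differ only by the term $p_{n+1}\mathbbm{1}_{V_{n+1}\le t}$. The paper dispatches the implication $V_{n+1}\le q_1\Rightarrow V_{n+1}\le q_2$ in one line (since replacing $V_{n+1}$ by $\infty$ can only raise the quantile, i.e.\ $q_1\le q_2$ always), and then proves the other direction by contrapositive after sorting $V_1,\ldots,V_n$ and identifying the quantile as an explicit order statistic; you instead argue both directions via CDF comparisons, with the key step being the gap $F_1(q_1)-F_2(q_1)=p_{n+1}\mathbbm{1}_{V_{n+1}\le q_1}>0$ when $q_1<q_2$. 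One small remark: your flagged subtlety (ii) is not actually tied to discreteness---the strict inequality $F_2(q_1)<\alpha$ for $q_1<q_2$ follows directly from the infimum definition of $q_2$ for any CDF.
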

\begin{proof}[Proof of Lemma \ref{lem:infequiv}]
By definition, we know 
\[
V_{n+1} \leq Q(\alpha; \sum^n_{i=1} p_i \delta_{V_i} + p_{n+1}\delta_{V_{n+1}}) \Rightarrow V_{n+1} \leq Q(\alpha; \sum^n_{i=1} p_i \delta_{V_i} + p_{n+1}\delta_\infty).
\]
To show that Lemma \ref{lem:infequiv} holds, we only need to show that 
\[
V_{n+1} >Q(\alpha; \sum^n_{i=1} p_i \delta_{V_i} + p_{n+1}\delta_{V_{n+1}})\Rightarrow V_{n+1} > Q(\alpha; \sum^n_{i=1} p_i \delta_{V_i} + p_{n+1}\delta_\infty).
\]
Without loss of generality, we assume $0 = V_{0} \leq V_1 \leq V_2 \leq \ldots \leq V_n$, and consider the case where $V_{n+1} >Q(\alpha; \sum^n_{i=1} p_i \delta_{V_i} + p_{n+1}\delta_{V_{n+1}})$.

In this case, we must have $\sum^n_{i=1} p_i \geq \alpha$, and the empirical lower $\alpha$ quantile is the smallest index $i$ such that $\sum^{i}_{j=1} p_j \geq \alpha$. Let $i^* \leq n$ be this index. Since $V_{n+1} > V_{i^*}$ and  $\sum^{i}_{j=1} p_j \geq \alpha$, by definition, we know
\begin{align*}
&\sum^n_{i=1}\mathbbm{1}_{V_i \leq V_{i^*}} \geq \alpha \Leftrightarrow Q(\alpha; \sum^n_{i=1} p_i \delta_{V_i} + p_{n+1}\delta_\infty)\leq V_i^*\\
\Rightarrow&V_{n+1} > Q(\alpha; \sum^n_{i=1} p_i \delta_{V_i} + p_{n+1}\delta_\infty).
\end{align*}
\end{proof}
\begin{lemma}
\label{lem:conditional}
For any event 
\[
\mathcal{T} \coloneqq \left\{\{Z_i, i = 1,\ldots, n+1\}= \{z_i\coloneqq (x_i, y_i), i = 1,\ldots, n+1\}\right\},
\]
we have
\[
\bP\{V^{Z_{n+1}}_{n+1} \leq Q(\tilde\alpha; \sum^{n+1}_{i=1} p^{H}_{n+1,i} \delta_{V^{Z_{n+1}}_{i}}) |\mathcal{T}\}= \bE\left\{\sum^{n+1}_{i=1}\frac{w(x_i)}{\sum^{n+1}_{j=1} w(x_j)}\mathbbm{1}_{v_i \leq v^*_i}|\mathcal{T}\right\},
\]
where $v_i = V(z_i, (z_1,\ldots, z_n, z_{n+1}))$,  $v^*_{i} =  Q(\tilde\alpha; \sum^{n+1}_{j=1}p^{H}_{i,j} \delta_{V^{Z_{n+1}}_j})$ for $i = 1,2,\ldots, n+1$, and $\tilde\alpha = \tilde\alpha(Z)$ can be dependent of the data of through the set $Z$ where $Z =\{Z_1,\ldots, Z_{n+1}\}$. The expectation on the right-hand-side is taken over the randomness of $\tilde\alpha$ conditional on $\mathcal{T}$.
\end{lemma}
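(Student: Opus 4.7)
The plan is to reduce the lemma to the weighted exchangeability identity from the covariate-shift literature (Proposition \ref{prop:prop2}), applied conditionally on the unordered multiset $\mathcal{T}$. The crucial observation is that, under Assumptions \ref{ass:ass1}--\ref{ass:ass2}, the ordered tuple $(Z_1,\ldots,Z_{n+1})$ has joint density proportional to $w(x_{n+1})\prod_{j=1}^{n+1} p(z_j)$ (with respect to the symmetric reference measure), so averaging over the $(n+1)!$ orderings compatible with $\mathcal{T}$ yields
\[
\bP\bigl(Z_{n+1}=z_i \bigm| \mathcal{T}\bigr) = \frac{w(x_i)}{\sum_{j=1}^{n+1} w(x_j)},\qquad i=1,\ldots,n+1.
\]
This is the weighted analogue of the standard ``exchangeability gives uniform permutation'' fact that underlies ordinary conformal inference.

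Next I would verify that the random quantities inside the indicator on the left-hand side are, once $\mathcal{T}$ is fixed, functions of the set $Z$ together with the label $i$ of the point playing the role of $Z_{n+1}$. Since $H(\cdot,\cdot,Z)$ and $V(\cdot,Z)$ depend on $Z$ only through its unordered contents, on the event $\mathcal{T}\cap\{Z_{n+1}=z_i\}$ we have $V^{Z_{n+1}}_{n+1}=v_i$, and the weighted empirical measure $\sum_{j=1}^{n+1} p^{H}_{n+1,j}\delta_{V^{Z_{n+1}}_j}$ becomes (after relabelling the summation index) the measure $\sum_{j=1}^{n+1} p^{H}_{i,j}\delta_{v_j}$ whose $\tilde\alpha$-quantile is exactly $v^*_i$. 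Hence the event inside the probability collapses to $\{v_i\leq v^*_i\}$.

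Combining these two steps via the tower property gives
\[
\bP\Bigl(V^{Z_{n+1}}_{n+1}\leq Q\bigl(\tilde\alpha;\,\textstyle\sum_{j} p^{H}_{n+1,j}\delta_{V^{Z_{n+1}}_{j}}\bigr)\Bigm|\mathcal{T}\Bigr)
= \bE\Bigl[\sum_{i=1}^{n+1}\mathbbm{1}\{Z_{n+1}=z_i\}\,\mathbbm{1}\{v_i\leq v^*_i\}\Bigm|\mathcal{T}\Bigr],
\]
and taking the conditional expectation of the label indicator via the weighted-exchangeability formula above produces exactly the claimed right-hand side. The possible extra randomness in $\tilde\alpha$ (relevant for the randomized rule of Theorem \ref{thm:tightness}, where $\tilde\alpha$ depends on auxiliary coin flips in addition to $Z$) is absorbed by one more application of the tower property: conditional on $\mathcal{T}$, this extra randomness is independent of which element of the set is $Z_{n+1}$, so the computation goes through pointwise in $\tilde\alpha$ and is then integrated against the law of $\tilde\alpha\mid\mathcal{T}$, which is precisely what the expectation on the right-hand side encodes.

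The main obstacle I anticipate is making the index bookkeeping rigorous, so that the symbols $p^{H}_{i,j}$ and $v^*_i$, which in the main text are defined relative to the sample labels $1,\ldots,n+1$, can unambiguously be read off from the fixed set elements $z_1,\ldots,z_{n+1}$ under $\mathcal{T}$. This relies on the symmetry of $H$ and $V$ in the set argument, which must be stated carefully before the bijection between ``orderings consistent with $\mathcal{T}$'' and ``choices of which $z_i$ is the test point'' can be invoked. Once this identification is made, the rest is a one-line application of the weighted-exchangeability computation and Fubini for the independent randomization of $\tilde\alpha$.
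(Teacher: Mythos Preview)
Your proposal is correct and follows essentially the same argument as the paper: both compute $\bP(Z_{n+1}=z_i\mid\mathcal{T})=w(x_i)/\sum_j w(x_j)$ via weighted exchangeability, then use the symmetry of $V(\cdot,Z)$ and $H(\cdot,\cdot,X)$ in the set argument to identify the left-hand indicator with $\mathbbm{1}\{v_i\leq v^*_i\}$ on $\{Z_{n+1}=z_i\}$, and finally marginalize over the independent randomness in $\tilde\alpha$. The paper phrases the first step in terms of a permutation $\sigma$ and verifies the relabelling identity as $v^*_i(\sigma)=v^*_{\sigma_i}$, but this is exactly your ``index bookkeeping'' concern stated in slightly different notation.
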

\begin{proof}[Proof of Lemma \ref{lem:conditional}]
Let $\sigma$ be a permutation of numbers $1,2,\ldots, n+1$.   We know that
\begin{align*}
P(\sigma_{n+1} = i|\mathcal{T}) &= \frac{w(x_i)\#\{\sigma:\sigma_{n+1} = i\}}{\sum^{m+1}_{j=1}w(x_j)\#\{\sigma:\sigma_{n+1} = j\}} = \frac{w(x_i)}{\sum^{n+1}_{j=1}w(x_j)}.
\end{align*}
Also, since the function $V(., Z) = V(.)$ and the localizer $H(., ., X) = H(.,.)$ have fixed function forms conditional on $\mathcal{T}$, and $\tilde\alpha$ (can be random) is independent of the data conditional $\mathcal{T}$, we also have
\begin{align*}
&\bP(V^{Z_{n+1}}_{n+1} \leq Q(\tilde\alpha; \sum^{n+1}_{i=1} p^{H}_{n+1,i} \delta_{V^{Z_{n+1}}_{i}}) |\mathcal{T}, \tilde\alpha) \\
=& \sum^{n+1}_{i=1} P(\sigma_{n+1} = i|\mathcal{T})\mathbbm{1}_{\{V_{n+1}\leq v^*_{n+1}(\sigma)|\mathcal{T}, \sigma_{n+1} = i\}}\\
=& \sum^{n+1}_{i=1} \frac{w(x_i)}{\sum^{n+1}_{j=1}w(x_j)}\mathbbm{1}_{\{v_i\leq v^*_{n+1}(\sigma)|\mathcal{T}, \sigma_{n+1} = i\}}
\end{align*}
where $v^*_{i}(\sigma) = Q(\tilde\alpha; \sum^n_{j=1}p^{H}_{\sigma_i,\sigma_j}\delta_{v_{\sigma_j}})$ is the realization of $v^*_i$ with data permutation $\sigma$ conditional on $\mathcal{T}$ and $\tilde\alpha$: 
\[
v^*_i(\sigma) = Q(\tilde\alpha; \sum^{n+1}_{k=1} \frac{H( x_{\sigma_i}, x_{\sigma_k})}{\sum^{n+1}_{j=1}H( x_{\sigma_i}, x_{\sigma_j})}\delta_{v_{\sigma_k}} )
\]
With a slight abuse of notation, we let $v^*_i$ corresponds to the case where $\sigma_i = i$.  We immediately observe that
\begin{equation}
\label{eq:app1}
v^*_i(\sigma) = v^*_{\sigma_i}
\end{equation}
Consequently, we have $\bP\{V^{Z_{n+1}}_{n+1} \leq v^*_{n+1} |\mathcal{T}, \tilde\alpha\} =  \sum^{n+1}_{i=1} \frac{w(x_i)}{\sum^{n+1}_{j=1}w(x_j)}\mathbbm{1}_{\{v_i\leq v^*_i\}}$. Marginalize over $\tilde\alpha|\mathcal{T}$, we have
\[
\bP\{V^{Z_{n+1}}_{n+1} \leq v^*_{n+1} |\mathcal{T}, \tilde\alpha\}  = \bE\{ \sum^{n+1}_{i=1} \frac{w(x_i)}{\sum^{n+1}_{j=1}w(x_j)}\mathbbm{1}_{\{v_i\leq v^*_i\}}|\mathcal{T}\}
\]
\end{proof}

\subsubsection{Proof of Theorem \ref{thm:localQuantile}}
Define 
\[
\mathcal{T} \coloneqq \left\{\{Z_i, i = 1,\ldots, n+1\}= \{z_i\coloneqq (x_i, y_i), i = 1,\ldots, n+1\}\right\}.
\]
When we choose $\tilde\alpha$ such that eq.(\ref{eq:goal1w}) is satisfied, this decision rule does not depend on the ordering of data conditional on $\mathcal{T}$: for any permutation $\sigma$ of numbers $1, 2,\ldots, n+1$, we have
\begin{align*}
\sum^{n+1}_{i=1}\frac{w(X_i)}{\sum^{n+1}_{j=1} w(X_j)}\mathbbm{1}_{V^{Z_{n+1}}_i \leq v^*_i}|\mathcal{T},\sigma &=  \sum^{n+1}_{i=1}\frac{w(x_{\sigma_i})}{\sum^{n+1}_{j=1} w(x_{\sigma_j})}\mathbbm{1}_{v_{\sigma_i} \leq v^*_{\sigma_i}} \\
&= \sum^{n+1}_{i=1}\frac{w(x_{i})}{\sum^{n+1}_{j=1} w(x_{j})}\mathbbm{1}_{v_{i} \leq v^*_{i}}.
\end{align*}
Since $V(., Z) $ and $H(., ., X) $ are fixed functions conditional on $\mathcal{T}$ (see the arguments for eq.(\ref{eq:app1}) in Lemma \ref{lem:conditional}). Hence, apply Lemma \ref{lem:conditional}, we have
\[
\bP\{V^{Z_{n+1}}_{n+1} \leq Q(\tilde\alpha; \hat \mF^{Z_{n+1}}_{n+1}) |\mathcal{T}\}= \bE\left\{\sum^{n+1}_{i=1}\frac{w(x_i)}{\sum^{n+1}_{j=1} w(x_j)}\mathbbm{1}_{v_i \leq v^*_i}|\mathcal{T}\right\} \geq \alpha.
\]
Marginalize over $\mathcal{T}$, we have
\[
\bP\{V^{Z_{n+1}}_{n+1}\leq  Q(\tilde\alpha; \hat \mF^{Z_{n+1}}_{n+1}) \} \geq \alpha.
\]
By Lemma \ref{lem:infequiv}, equivalently, we also have
\[
\bP\{V^{Z_{n+1}}_{n+1}\leq Q(\tilde\alpha; \hat \mF^{Z_{n+1}})\} \geq \alpha.
\]

\subsubsection{Proof of Theorem \ref{thm:tightness}}
Define
\[
\mathcal{T} \coloneqq \left\{\{Z_i, i = 1,\ldots, n+1\}= \{z_i\coloneqq (x_i, y_i), i = 1,\ldots, n+1\}\right\}.
\]
Following the same argument as used for $\tilde\alpha$ in the proof of Theorem \ref{thm:localQuantile}, we know that both $\tilde\alpha_1$, $\tilde\alpha_2$ and $\alpha_1$, $\alpha_2$ are fixed conditional on $\mathcal{T}$.  As a result,  when $\tilde\alpha = \left\{\begin{array}{cc}\tilde \alpha_1&w.p.\;\frac{\alpha - \alpha_2}{\alpha_1-\alpha_2}\\
\tilde\alpha_2 & w.p.\;\frac{\alpha_1 - \alpha}{\alpha_1-\alpha_2}
\end{array}\right.$, we know that $\tilde\alpha$ is independent of the data conditional on $\mathcal{T}$. Apply Lemma \ref{lem:conditional}, we have
\begin{align*}
\bP\{V^{Z_{n+1}}_{n+1} \leq Q(\tilde\alpha; \hat \mF^{Z_{n+1}}_{n+1}) |\mathcal{T}\}&= \bE\left\{\sum^{n+1}_{i=1}\frac{w(x_i)}{\sum^{n+1}_{j=1} w(x_j)}\mathbbm{1}_{v_i \leq v^*_i}|\mathcal{T}\right\} \\
& = \alpha_1\frac{\alpha-\alpha_2}{\alpha_1 -\alpha_2}+\alpha_2\frac{\alpha_1 - \alpha}{\alpha_1 - \alpha_2} = \alpha.
\end{align*}
Marginalize over $\mathcal{T}$, we have
\[
\bP\{V^{Z_{n+1}}_{n+1} \leq Q(\tilde\alpha; \hat \mF^{Z_{n+1}}_{n+1})\}= \alpha.
\]
By Lemma \ref{lem:infequiv}, equivalently, we have
\[
\bP\{V^{Z_{n+1}}_{n+1} \leq Q(\tilde\alpha; \hat \mF^{Z_{n+1}})\}= \alpha.
\]
\subsection{Proof of Theorem \ref{thm:localQuantileFix}}
Before we proceed to the proof of Theorem \ref{thm:localQuantileFix}, we first introduce Lemma \ref{lem:bound}.
\begin{lemma}
\label{lem:bound}
Let $Z_{n+1} = (X_{n+1}, y)$, let  $v^*_{i}  = Q(\tilde\alpha; \sum^n_{j=1}p^{H}_{i,j} \delta_{V_j}+p^{H}_{i,n+1}\delta_{V_{n+1}}),\;\forall i = 1,2,\ldots, n+1$,  and $\hat \alpha(y) \coloneqq  \sum^{n+1}_{i=1}\frac{w(X_i)}{\sum^{n+1}_{j=1j}w(X_j)}\mathbbm{1}_{V_i \leq v^*_i}$ for $v^*_i$ evaluated at $Z_{n+1} = (X_{n+1},y)$.  For any $\tilde\alpha$ such that eq.\ (\ref{eq:goal2w}) holds, we have $\min_y \hat \alpha(y)\geq \alpha$.
\end{lemma}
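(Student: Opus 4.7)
The plan is to handle every choice of $y$ by splitting into two cases according to whether $V_{n+1} := V(X_{n+1},y)$ lies below or above the fixed threshold $\bar v^* = Q(\tilde\alpha;\hat\mF)$. The two conditions in eq.~(\ref{eq:goal2w}) are exactly designed to cover these two extreme scenarios, and a monotonicity argument on empirical quantiles will bridge the gap.

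First I would record the monotonicity fact: if in a weighted empirical distribution $\sum p_j\delta_{v_j}$ we only move the $(n{+}1)$-th atom to a larger value while keeping weights fixed, the $\tilde\alpha$-quantile is non-decreasing. This is because the CDF is pointwise non-increasing in that atom's location. Applied to $v^*_i$ (for $i\leq n$) as a function of $V_{n+1}$, this yields $v^*_i \geq v^*_{i2}$ whenever $V_{n+1}\geq 0$, and $v^*_i \geq v^*_{i1}$ whenever $V_{n+1}\geq \bar v^*$, since $v^*_{i2}$ and $v^*_{i1}$ are the values attained when the moving atom equals $0$ and $\bar v^*$ respectively.

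Next I would bring in Lemma \ref{lem:infequiv}, applied to the distribution $\hat\mF_{n+1}$: it gives the equivalence $V_{n+1}\leq v^*_{n+1}\iff V_{n+1}\leq \bar v^*$. This lets me read off whether the last indicator in $\hat\alpha(y)$ fires purely from the comparison $V_{n+1}$ vs. $\bar v^*$. With these two ingredients in hand I would split:
\begin{itemize}
\item \textbf{Case A} ($V_{n+1}\leq \bar v^*$). Then $V_{n+1}\leq v^*_{n+1}$ so the last term contributes $\frac{w(X_{n+1})}{\sum_j w(X_j)}$, and monotonicity gives $\mathbbm{1}_{V_i\leq v^*_i}\geq \mathbbm{1}_{V_i\leq v^*_{i2}}$ for every $i\leq n$. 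Summing and invoking the second inequality of (\ref{eq:goal2w}) yields $\hat\alpha(y)\geq\alpha$.
\item \textbf{Case B} ($V_{n+1}> \bar v^*$). Then $V_{n+1}>v^*_{n+1}$ so the last term vanishes, while monotonicity gives $\mathbbm{1}_{V_i\leq v^*_i}\geq \mathbbm{1}_{V_i\leq v^*_{i1}}$ for every $i\leq n$. Summing and invoking the first inequality of (\ref{eq:goal2w}) again yields $\hat\alpha(y)\geq\alpha$.
\end{itemize}
Taking infimum over $y$ closes the argument.

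The only real obstacle is formalizing the monotonicity of $v^*_i$ carefully in Case B, where I must check that strictly moving the atom from $\bar v^*$ past $\bar v^*$ cannot \emph{decrease} the quantile even when there are ties at $\bar v^*$ among the $V_j$'s; the CDF comparison argument handles this with the standard left-continuous definition of $Q(\tilde\alpha;\cdot)$ used in the paper, and the equivalence from Lemma \ref{lem:infequiv} cleanly takes care of the boundary case $V_{n+1}=\bar v^*$ so that the two cases form an exhaustive dichotomy.
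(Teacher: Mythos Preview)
Your proposal is correct and follows essentially the same two-case decomposition as the paper: split according to whether $V_{n+1}\leq\bar v^*$ or $V_{n+1}>\bar v^*$, use monotonicity of $v^*_i$ in $V_{n+1}$ to bound the first $n$ indicators by those built from $v^*_{i2}$ (Case~A) or $v^*_{i1}$ (Case~B), and read off the $(n{+}1)$-th indicator from the comparison with $\bar v^*$. The only cosmetic difference is that the paper establishes directly that $v^*_{n+1}=\bar v^*$ when $V_{n+1}>\bar v^*$, whereas you invoke Lemma~\ref{lem:infequiv} to obtain the equivalence $V_{n+1}\leq v^*_{n+1}\Leftrightarrow V_{n+1}\leq\bar v^*$; these are two routes to the same fact and the rest of the argument is identical.
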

\begin{proof}[Proof of Lemma \ref{lem:bound}]
The key observations which  we use to prove Lemma \ref{lem:bound} are that, for any  $\tilde\alpha$,  $y$ only influences $v^*_{i}$ through $V_{n+1}$. 
\begin{itemize}
\item $v^*_i$ is non-decreasing as $V_{n+1}$ increases. Thus,   $\sum^{n}_{i=1}\frac{w(X_i)}{\sum^{n+1}_{j=1j}w(X_j)}\mathbbm{1}_{V_i \leq v^*_i}$ is non-decreasing as $V_{n+1}$ increases.
\item  $\bar v^{*}=v^*_{n+1}$ if $ V_{n+1} > \bar v^{*}$:  If $\bar v^* = \infty$, we have $\bar v^{*}=v^*_{n+1}$. Otherwise, the quantile $Q(\tilde\alpha; \hat \mF)$ takes value in $\{V_1, \ldots, V_n\}$, and suppose it is the $(i^*)^{th} (\leq n)$ smallest value in $\{V_1,\ldots, V_n\}$.  Without loss of generality, suppose $V_1\leq V_2\leq \ldots\leq V_n$.  By definition, $i^* $ is the smallest number such that 
\[
\sum^{i^*}_{i=1} \frac{p^{H}_{n+1,i} \delta_{V_j}}{\sum^{n+1}_{j=1}p^{H}_{n+1,j} \delta_{V_j}}\geq \tilde\alpha.
\]
On the one hand, according to the definition of $Q(\tilde\alpha; \hat \mF)$, we have $Q(\tilde\alpha; \hat \mF) \leq V_{i^*}$. Hence, $v^*_{n+1}\geq \bar v^*$. On the other hand, we always have   $Q(\alpha; \hat \mF)\geq Q(\alpha, \sum^{n+1}_{j=1}p^{H}_{n+1,j} \delta_{V_j})$. Consequently,  we have $\bar v^{*}=v^*_{n+1}$.
\end{itemize}
This leads us to consider the following two cases:
\begin{enumerate}
\item If  $\bar v^{*} < V_{n+1}$, use the fact that $v^*_i$ is non-decreasing in $V_{n+1}$ and $v^*_{n+1} = \bar v^*$, we have
\begin{align*}
\inf_{\bar v^{*} <V_{n+1}\leq \infty} \hat \alpha(y)&= \inf_{V_{n+1} > \bar v^{*}} \sum^{n}_{i=1}\frac{w(X_i)}{\sum^{n+1}_{j=1j}w(X_j)}\mathbbm{1}_{V_i \leq v^*_i}\\
&\geq \sum^{n}_{i=1}\frac{w(X_i)}{\sum^{n+1}_{j=1}w(X_j)}\mathbbm{1}_{V_i \leq v^*_{i1}}.
\end{align*}
\item If $V_{n+1}\leq \bar v^* $, again by the non-decreasing nature of $ \sum^{n}_{i=1}\frac{w(X_i)}{\sum^{n+1}_{j=1j}w(X_j)}\mathbbm{1}_{V_i \leq v^*_i}$, we have
\[
\inf_{V_{n+1} \leq v^*_{n+1} <\infty} \hat \alpha(y)  = \sum^{n}_{i=1}\frac{w(X_i)}{\sum^{n+1}_{j=1j}w(X_j)}\mathbbm{1}_{V_i \leq v^*_{i2}} + \frac{w(X_{n+1})}{\sum^{n+1}_{j=1j}w(X_j)}.
\]
\end{enumerate}
Combine them together, we have
\begin{align*}
\inf_{y} \hat \alpha(y)& \geq \min( \sum^{n}_{i=1}\frac{w(X_i)}{\sum^{n+1}_{j=1j}w(X_j)}\mathbbm{1}_{V_i \leq v^*_{i1}}, \sum^{n}_{i=1}\frac{w(X_i)}{\sum^{n+1}_{j=1j}w(X_j)}\mathbbm{1}_{V_i \leq v^*_{i2}}+\frac{w(X_{n+1})}{\sum^{n+1}_{j=1} w(X_j)}) .
\end{align*}
\end{proof}
We now prove Theorem \ref{thm:localQuantileFix} using  Lemma \ref{lem:conditional}  and Lemma \ref{lem:bound}.
\begin{proof}[Proof of Theorem \ref{thm:localQuantileFix}]
Let $\mathcal{T} = \{(Z_i, i = 1,\ldots, n+1) = (z_i, i = 1,\ldots, n+1)\}$ be the set of values for $Z_1, \ldots, Z_{n+1}$, where $z_i = (x_i,y_i)$ for $i= 1,\ldots, n+1$.  Let $\sigma_{1:(n+1)}$ be a permutation of $\{1,\ldots, n+1\}$.  By Lemma \ref{lem:bound}, although $\tilde\alpha$ does not depend on $y_{n+1}$, we can still achieve
\[
 \sum^{n+1}_{i=1}\frac{w(X_{i})}{\sum^{n+1}_{j=1j}w(X_{j})}\mathbbm{1}_{V_{i} \leq v^*_{i, n+1}} \geq \alpha
\]
where $v^*_{i,n+1} = Q(\tilde\alpha; \hat \mF_i)$ and $\tilde\alpha$ is a value we found (based on some pre-fixed procedure) satisfying eq.\ (\ref{eq:goal2w}). Note that $\tilde\alpha$ is not symmetric on the observations $Z_1, \ldots, Z_{n+1}$, and it assigns $Z_{n+1}$ a special role. Hence, we can not directly apply Lemma \ref{lem:conditional}. To use Lemma \ref{lem:conditional}, we first apply Lemma \ref{lem:bound} to permuted observations, which leads to the eq.\ (\ref{eq:bound2}):
\begin{equation}
\label{eq:bound2}
 \sum^{n+1}_{i=1}\frac{w(X_{\sigma_{i}})}{\sum^{n+1}_{j=1j}w(X_{\sigma_j})}\mathbbm{1}_{V_{\sigma_i} \leq v^*_{\sigma_i, \sigma_{n+1}}} \geq \alpha
\end{equation}
where $v^*_{\sigma_i, \sigma_{n+1}} = Q(\tilde\alpha^{\sigma_{n+1}}; \hat \mF_{\sigma_{i}})$, and $\tilde\alpha^{\sigma_{n+1}}$ is a value for $\tilde\alpha$ such that eq.\ (\ref{eq:goal2w}) holds with the permutation order $\sigma$. Since eq.\ (\ref{eq:bound2}) holds for any permutation $\sigma$, and the permutation only influence it via $\tilde\alpha^{\sigma_{n+1}}$, consider all the possibilities for $\sigma_{n+1}$, e.g., $\sigma_{n+1} = 1,\ldots, n+1$, we have
\begin{equation}
\label{eq:bound3}
 \sum^{n+1}_{i=1}\frac{w(X_{\sigma_{i}})}{\sum^{n+1}_{j=1j}w(X_{\sigma_j})}\mathbbm{1}_{V_{\sigma_i} \leq u^*_{\sigma_i}} \geq \alpha
\end{equation}
where $u^*_{\sigma_i} =  Q(\min^{n+1}_{l=1}\tilde\alpha^{l}; \hat \mF_{\sigma_{i}})$. The new quantity $\min^{n+1}_{l=1}\tilde\alpha^{l}$ depends only on $\{Z_1,\ldots, Z_{n+1}\}$ but not their ordering, thus, we can  combine eq.\  (\ref{eq:bound3}) with Lemma \ref{lem:conditional} to prove Theorem \ref{thm:localQuantileFix}:
\begin{align*}
\bP\{V_{n+1} \leq Q(\tilde\alpha;\hat \mF_{n+1})|\mathcal{T}\}&\geq \bP\{V_{n+1} \leq Q(\min^{n+1}_{i=1} \tilde\alpha^{n+1};\hat \mF_{n+1})|\mathcal{T}\}\\
&= \bE\{ \sum^{n+1}_{i=1} \frac{w(x_i)}{\sum^{n+1}_{j=1}w(x_j)}\mathbbm{1}_{\{v_i\leq u^*_i\}}|\mathcal{T}\} \geq \alpha
\end{align*}
The above holds for arbitrary value set $\mathcal{T}$, hence, marginalizing over  all possible values of $z_i$ for $i=1,\ldots, n$ and $x_{n+1}$, we have  $\bP\{V_{n+1} \leq Q(\tilde\alpha;  \hat \mF)\} \geq \alpha$.
\end{proof}
\subsection{Proof of  Theorem \ref{thm:approximate}}
\subsubsection{Part (a)}
\begin{proof}
Conditional on $X_{n+1} = x_0$, define $\tilde p(x)=  \frac{H(x_0,x)}{\sum^{n}_{j=1}H(x_0,X_i)+H(x_0,\tilde X_{n+1})}$ and let 
\[
\widetilde C(\tilde X_{n+1},x_0)\coloneqq \{y: V(\tilde X_{n+1},y)\leq Q(\alpha; \sum^{n}_{i=1}\tilde p(X_i)\delta_{V_i}+\tilde p(\tilde X_{n+1}) \delta_{\infty})\}.
\]
As a direct application of Proposition \ref{prop:prop2}, we have 
\[
\bP\{\tilde Y_{n+1} \in \widetilde C(\tilde X_{n+1},x_0)\}\geq \alpha.
\]

Since the $H(x_0, x_0)\geq H( x_0, \tilde X_{n+1})$, define $p(x)=  \frac{H(x_0,x)}{\sum^{n}_{j=1}H(x_0,X_i)+H(x_0,x_0)}$, we have
\[
Q(\alpha; \sum^{n}_{i=1}\tilde p(X_i)\delta_{V_i}+\tilde p(\tilde X_{n+1}) \delta_{\infty}) \leq Q(\alpha; \sum^{n}_{i=1}p(X_i)\delta_{V_i}+p(x_0) \delta_{\infty}) .
\]
Hence, let $\hat C(x_0)\coloneqq \{y: V(\tilde X_{n+1},y)\leq Q(\alpha; \sum^{n}_{i=1}p(X_i)\delta_{V_i}+p(x_0) \delta_{\infty})\}$, we have
\[
\bP\{\tilde Y_{n+1} \in \hat C(x_0)\} \geq \alpha.
\]
The above is true for all $x_0\in \real^p$, thus, $\bP\{\tilde Y_{n+1} \in \hC( X_{n+1})|X_{n+1} = x_0\}\geq \alpha$ for all $x_0$.
\end{proof}

\subsubsection{Part (b)}
\begin{proof}
From part (a), we know that  for any $x_0$, we have
\[
\bP\{\tilde Y_{n+1} \in \hC( X_{n+1})|X_{n+1} = x_0\}\geq \alpha
\]
Conditional on $X_{n+1}=x_0$, let $M = \int K(\frac{x-x_0}{h})d P_X(x)$ be the normalization constant for the distribution of $\tilde X_i$ and $\frac{d P^{x_0}(x)}{d x} = \frac{1}{M} K(\frac{x-x_0}{h}) \frac{d P(x)}{d x}$.  Let $\mu(.)$ be the joint distribution of $Z_{1:n}$ after reweighting. Then, we have:
\begin{align*}
\alpha\leq&\bP\{\tilde Y_{n+1} \in \hC( X_{n+1})|X_{n+1} = x_0\}\\
=&\int_{z_{1:n}}\left(\int_{\tilde x_{n+1}}\left(\int_{\tilde y_{n+1}} \mathbbm{1}_{\tilde y_{n+1}\in \hC(x_0)}p_{Y|X}(\tilde y_{n+1}|\tilde x_{n+1}) d\tilde y_{n+1}\right) d P^{x_0}(x)\right) d \mu(z_{1:n})\\
=&\frac{1}{M}\int_{z_{1:n}}\left(\int_{\tilde x_{n+1}}\left(\int_{\tilde y_{n+1}} \mathbbm{1}_{V(x, y)\leq Q(\alpha, \hat \mF)}p_{Y|X}(\tilde y_{n+1}|\tilde x_{n+1}) d\tilde y_{n+1}\right)K(\frac{x-x_0}{h})d P(x)\right)d \mu(x_{1:n})
\end{align*}
By the Lipschitz assumption, we know
\begin{align*}
\alpha\leq&\frac{1}{M} \int_{X_{1:n}}\left(\int_{\tilde X_{n+1}}\left(\int_{\tilde Y_{n+1}} \mathbbm{1}_{V(x, y)\leq Q(\alpha, \hat \mF)}p_{Y|X}(y|x_0)dy\right)K(\frac{x-x_0}{h})d P_X(x)\right)d \mu(x_{1:n})\\
 +&\frac{L}{M}\int \|x-x_0\| K(\frac{x-x_0}{h})d P_X(x)\\
=& P(Y_{n+1}\in \hat C(X_{n+1})|X_{n+1}=x_0)+\frac{L}{M}\int \|x-x_0\| K(\frac{x-x_0}{h})d P_X(x)
\end{align*}
For a Gaussian kernel $K(\frac{x-x_0}{h}) = \frac{1}{(2\pi h^2)^{\frac{d}{2}}}\exp(-\frac{\|x-x_0\|_2^2}{h^2})$ and under the regularity condition for $P_X(.)$, we know that $b_1\leq M\leq b_2$ and 
\begin{align*}
&\int \|x-x_0\| K(\frac{x-x_0}{h})d P_X(x)  \leq \frac{b_2 }{(2h^2)^{\frac{d}{2}}}\frac{1}{\Gamma(\frac{d}{2}+1)}\int^{\infty}_{r=0} r^d\exp(-\frac{r^2}{2h^2})  d r =  \frac{hb_2}{(d+1)\Gamma(\frac{d}{2}+1)2^{\frac{d}{2}}}
\end{align*}

Hence, if $h\rightarrow 0$, we have $[\alpha - P(Y_{n+1}\in \hC(X_{n+1})|X_{n+1}=x_0)]_+\rightarrow 0$.
\end{proof}

\section{Choice of $H$}
\label{app:choiceH}
We consider two types of localizers in this paper:
 \begin{enumerate}
 \item  Distance based localizer
 \[
 H_h(x_1, x_2,X)  = \mathbbm{1}_{\{|\frac{x_2- x_1}{h}| \leq 1\}}.
 \]
 \item Nearest-neighbor based localizer
 \[
 H_h(x_1, x_2,X) = \mathbbm{1}_{\{|x_1-x_2| \leq Q(\frac{h}{n}; \sum^{n+1}_{i=1}\delta_{|X_i - x_1|}) \}}.
 \]
 \end{enumerate}
 In practice, we can pick $h$ beforehand based on a date set $\mathcal{D}_0$ that is independent of $Z=\{Z_1,\ldots, Z_n, Z_{n+1}\}$, with $Z^0_i\overset{i.i.d}{\sim}\mathcal{P}$ for $Z^0_i=(X^0_i, Y^0_i)\in \mathcal{D}_0$, $i = 1,\ldots, m$. Let $X^0 = \{X^0_1,\ldots, X^0_m\}$. 
 
Define the score for sample $Z^0_i$ as $V^0_i = V(Z^0_i)$ if $V(.)$ is also independent of $\mathcal{D}_0$. If  $V(.)$ is trained using $\mathcal{D}_0$, we suggest to let $V^0_i$ be its score from cross-validation using $\mathcal{D}_0$.  For example , suppose $V(z) = |y - \hat \mu(x)|$, where $\hat \mu(.)$ is the prediction function trained using $\mathcal{D}_0$, we can let
\[
V^0_i = |Y^0_i - \hat \mu^{-i}(X^0_i)|
\]
where $\hat \mu^{-i}(X^0_i)$ is the trained prediction function with a subset in $\mathcal{D}_0\setminus\{Z^0_i\}$. 

Based on the discussion in section \ref{sec:method_h},  we want to  to tradeoff between locality and volatility, and choose $h$ to have relatively narrow and stable CIs for most of the samples.   Let $\mathcal{X}$ be a subset of $\mathcal{D}_0$. We suggest to pick $h$ such that in $\mathcal{X}$: (1) the average length for CI is small, (2) the average variance of lengths of CIs  conditional on $x$ is small, and (3) the coverage is at least $\alpha$ for the constructed CI in $\mathcal{X}$.  We consider the subset  $\mathcal{X}$ instead of  every sample in $\mathcal{D}_0$ because, for the distance based localizer, it is okay if we have a  small portion of samples with $\infty$-length CI. In this case, we can compare choices of $h$ based  on those points with finite length CIs by considering the samples in the subset  subset  $\mathcal{X}$ .  We do require  the subset $\mathcal{X}$ to be large though, for example, by default, we let $\mathcal{X}$ contain $90\%$ of the samples, and if $h$ leads to more than 10\% of CIs being $\infty$, it is always not preferred.

More specifically, let $h_1 < h_2 < \ldots < h_L$, we use the following steps to choose $h$ from $h_l$ for $1\leq l\leq L$ automatically using $\mathcal{D}_0$.  To reduce the computational complexity, we simply let $\tilde\alpha = \alpha$ in Algorithm \ref{alg:alg1}.
\begin{enumerate}
\item Let $\bar v^*_{i,l} $ be the realization of $\bar v^*$ at $\tilde\alpha = \alpha$, with test sample $Z^0_i$ and training samples $\mathcal{D}_0\setminus\{Z^0_i\}$, and with parameter $h_l$ for the localizer $H$: $\bar v^*_{i,l} = Q(\alpha; \sum_{j\neq i} p^{l}_{i, j}\delta_{V^0_{j}}+p^{l}_{i, i}\delta_{\infty})$, here $p^{l}_{i,j} = \frac{H_{h_l}(X^0_i, X^0_j, X^0)}{\sum^m_{j=1}H_{h_l}(X^0_i, X^0_j, X^0)}$.
\item As $h$ becomes smaller, the percent of $\bar v^*_{i,l}$ being $\infty$ may becomes higher for $i = 1,\ldots, m$ (note that if $\bar v^*_{i,l_1} = \infty$, then, for $l_2 < l_1$, $\bar v^*_{i,l_2} = \infty$ ). We consider only those $h_l$ that result in less than $(1-\omega)$ percent of $\infty$, and let $\mathcal{X}\subseteq \mathcal{D}_0$ be the intersection of samples with finite $\bar v^*_{i,l}$ for all $h_l$ we consider.
\item Let $s_l=\frac{\sum^m_{i=1}\bar v^*_{i,l}\mathbbm{1}_{X^0_i\in \mathcal{X}}}{\sum^m_{i=1}\mathbbm{1}_{X^0_i\in \mathcal{X}}}$ be an estimate of average CI length in $\mathcal{X}$ using $h_l$.
\item Let  $\gamma_l =  \frac{(1-\alpha)\sum^m_{i=1}\mathbbm{1}_{X^0_i\in \mathcal{X}}}{\sum^m_{i=1}\mathbbm{1}_{\{X^0_i\in \mathcal{X}, V^0_i > \bar v^*_{i,l}\}}}\vee 1$ be a measure of degree of empirical under-coverage. (If the empirical coverage for samples in $\mathcal{X}$ is at least $\alpha$, $\gamma_l = 1$; otherwise, $\gamma_l > 1$.)
\item We estimate the average standard deviation with Bootstrap: for each sample $X^0_i$ and $h = h_l$, let $\bar v^{b,*}_{i,l}$, $b = 1,\ldots, B$, be the value $\bar v^*$ with test sample $X^0_i$ and $(n-1)$ training samples $Z^0_j$ bootstrapped from $\mathcal{D}_0$ with their corresponding score values $V^{0}_j$. Let $\sigma_{i,l}$ be the estimated standard deviation using those $\bar v^{b,*}_{i,l}$  with finite values for $b = 1,\ldots, B$,  and let $\sigma_l$ be the  average standard deviation of $\sigma_{i,l}$ across $i = 1,\ldots, m$.
\item Choose $h$ as $h^* = \arg\min_{h\in \{h_1,\ldots, h_L\}}\left( \gamma_l\times (s_l+\sigma_l)\right)$.
\end{enumerate}
By default, we let $\omega = .9$ and $B = 20$.  In high-dimension where $p$ is large,  instead of applying the localizer to the raw feature $x$,  we usually will prefer to use a low dimensional function $t:\real^p\rightarrow \real^K$, and apply $H$ to $t(x)$. How to  find a good $t$ is non-trivial and beyond the scope of this paper. Here, to illustrate that the localized conformal prediction still gain over the conformal prediction if we can approximately find the low dimensional direction where  the score function has high variability, we consider a simulated high dimensional example and simply let $t(x) = x_j$ where $j$ is the direction that leads to the largest mutual information  between $V^0_i$ and  $X^0_{i,j}$, $i = 1,\ldots, m$.

\begin{example}
\label{exm2}
Let $Y_i = X^T_i\beta+\epsilon_i$, with $\beta = (\underbrace{1,\ldots, 1}_{3},\underbrace{0,\ldots, 0}_{p-3})^T$,  $X_{i,j}\sim Unif[-3,3]$  for $i = 1,\ldots, n+1$ and $j = 1,\ldots, p$, and we consider two cases of error distribution: (a) $\epsilon_i \overset{i.i.d}{\sim} N(0,1)$, and (b)$\epsilon_i |X_i\sim \left\{\begin{array}{cc}.5N(0,1)& |X_{i,p}|\leq 1\\ 2N(0,1) &|X_{i,p}| > 1\end{array}\right.$. 
We let $V(x, y) = |y - \mu(x)|$, where $\mu(x)$ is the prediction model $\mu(x)$ trained using  cross-validation lasso regression on a data set $\mathcal{D}_0$ of size $n = 500$. We use an independent set $\mathcal{D}_1$ of size $n = 500$ to perform the conformal inference and localized conformal inference. For  localized conformal inference, we use both the distance based localizer $H_h^1$ and the nearest-neighbor based localizer $H_h^2$ with the tuning parameter $h$ automatically chosen using $\mathcal{D}_0$. We perform 1000 experiments for $p = 3$ and $p = 500$. We see that all three constructions have controlled the coverage in Table  \ref{tab:exm2}. In Figure \ref{fig:exm2}, we plot the constructed CIs at $\alpha = .95$ for $V_i$ using different methods and the true values of $V_i$ across 1000 repetitions.  We plot the constructed lower and upper boundaries of CIs against their feature values $X_{i,p}$, and we can see that localized conformal predictions with both the distance-based and the nearest neighbor based localizers (red and blue dots) have captured the underlying heterogeneity of CIs for across different $X_{i,p}$.
\begin{table}[H]
\centering
\caption{Example \ref{exm2}. Coverage at $\alpha = .95$.}
\label{tab:exm2}
\begin{tabular}{|l|rr|rr|}
  \hline
$\alpha = .95$ & $p = 3$ &  &$p = 500$ & \\ 
   \hline
  & (a) & (b) & (a) & (b) \\ 
  \hline
CB & 0.95 & 0.95 & 0.94 & 0.96 \\ 
  LCB1& 0.95 & 0.95 & 0.95 & 0.96 \\ 
  LCB2 & 0.95 & 0.96 & 0.94 & 0.96 \\ 
   \hline
\end{tabular}
\end{table}
 \end{example}
 
\begin{figure}
\caption{\em Example \ref{exm2}. Confidence bands constructed using 1000 repetitions with targeted level at $\alpha = .95$. The black,  blue, red and green dots respectively  represent (1) actual $V_i$ for the test samples (error), (2) the conformal inference (CB) for $V_i$, (3)the localized conformal inference for $V_i$ with distance based localizer $H_h^1$ (LCB1), and (4) the localized conformal inference with nearest-neighbor based localizer $H_h^2$ (LCB2). The x-axis shows values for $X_p$, and y-axis shows values for the upper and lower boundaries of constructed CIs for each of the test samples.}
\label{fig:exm2}
\begin{center}
\includegraphics[width = .8\textwidth, height = 1\textwidth]{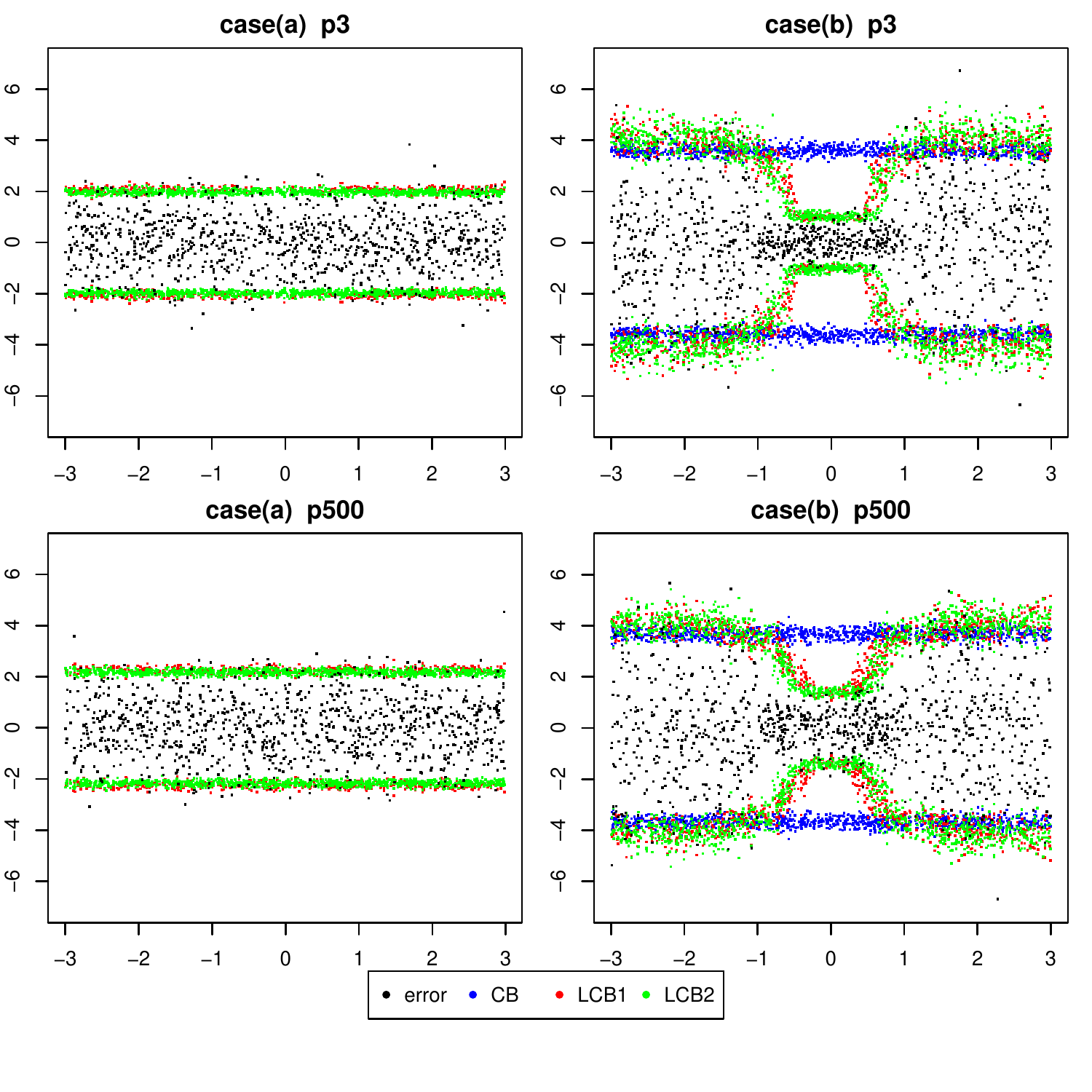} 
\end{center}
\end{figure}

\bibliographystyle{unsrtnat}
\bibliography{distributionShift}

\begin{thebibliography}{21}
\providecommand{\natexlab}[1]{#1}
\providecommand{\url}[1]{\texttt{#1}}
\expandafter\ifx\csname urlstyle\endcsname\relax
  \providecommand{\doi}[1]{doi: #1}\else
  \providecommand{\doi}{doi: \begingroup \urlstyle{rm}\Url}\fi

\bibitem[Vovk et~al.(2005)Vovk, Gammerman, and Shafer]{vovk2005algorithmic}
Vladimir Vovk, Alex Gammerman, and Glenn Shafer.
\newblock \emph{Algorithmic learning in a random world}.
\newblock Springer Science \& Business Media, 2005.

\bibitem[Shafer and Vovk(2008)]{shafer2008tutorial}
Glenn Shafer and Vladimir Vovk.
\newblock A tutorial on conformal prediction.
\newblock \emph{Journal of Machine Learning Research}, 9\penalty0
  (Mar):\penalty0 371--421, 2008.

\bibitem[Vovk et~al.(2009)Vovk, Nouretdinov, Gammerman, et~al.]{vovk2009line}
Vladimir Vovk, Ilia Nouretdinov, Alex Gammerman, et~al.
\newblock On-line predictive linear regression.
\newblock \emph{The Annals of Statistics}, 37\penalty0 (3):\penalty0
  1566--1590, 2009.

\bibitem[Lei and Wasserman(2014)]{lei2014distribution}
Jing Lei and Larry Wasserman.
\newblock Distribution-free prediction bands for non-parametric regression.
\newblock \emph{Journal of the Royal Statistical Society: Series B (Statistical
  Methodology)}, 76\penalty0 (1):\penalty0 71--96, 2014.

\bibitem[Lei et~al.(2018)Lei, G?Sell, Rinaldo, Tibshirani, and
  Wasserman]{lei2018distribution}
Jing Lei, Max G?Sell, Alessandro Rinaldo, Ryan~J Tibshirani, and Larry
  Wasserman.
\newblock Distribution-free predictive inference for regression.
\newblock \emph{Journal of the American Statistical Association}, 113\penalty0
  (523):\penalty0 1094--1111, 2018.

\bibitem[Vovk(2012)]{vovk2012conditional}
Vladimir Vovk.
\newblock Conditional validity of inductive conformal predictors.
\newblock In \emph{Asian conference on machine learning}, pages 475--490, 2012.

\bibitem[Barber et~al.(2019{\natexlab{a}})Barber, Candes, Ramdas, and
  Tibshirani]{barber2019limits}
Rina~Foygel Barber, Emmanuel~J Candes, Aaditya Ramdas, and Ryan~J Tibshirani.
\newblock The limits of distribution-free conditional predictive inference.
\newblock \emph{arXiv preprint arXiv:1903.04684}, 2019{\natexlab{a}}.

\bibitem[Barber et~al.(2019{\natexlab{b}})Barber, Candes, Ramdas, and
  Tibshirani]{barber2019conformal}
Rina~Foygel Barber, Emmanuel~J Candes, Aaditya Ramdas, and Ryan~J Tibshirani.
\newblock Conformal prediction under covariate shift.
\newblock \emph{arXiv preprint arXiv:1904.06019}, 2019{\natexlab{b}}.

\bibitem[Romano et~al.(2019)Romano, Patterson, and
  Candes]{romano2019conformalized}
Yaniv Romano, Evan Patterson, and Emmanuel Candes.
\newblock Conformalized quantile regression.
\newblock In \emph{Advances in Neural Information Processing Systems}, pages
  3538--3548, 2019.

\bibitem[Kivaranovic et~al.(2019)Kivaranovic, Johnson, and
  Leeb]{kivaranovic2019adaptive}
Danijel Kivaranovic, Kory~D Johnson, and Hannes Leeb.
\newblock Adaptive, distribution-free prediction intervals for deep neural
  networks.
\newblock \emph{arXiv preprint arXiv:1905.10634}, 2019.

\bibitem[Sesia and Cand{\`e}s(2020)]{sesia2020comparison}
Matteo Sesia and Emmanuel~J Cand{\`e}s.
\newblock A comparison of some conformal quantile regression methods.
\newblock \emph{Stat}, 9\penalty0 (1):\penalty0 e261, 2020.

\bibitem[Izbicki et~al.(2019)Izbicki, Shimizu, and
  Stern]{izbicki2019distribution}
Rafael Izbicki, Gilson~T Shimizu, and Rafael~B Stern.
\newblock Distribution-free conditional predictive bands using density
  estimators.
\newblock \emph{arXiv preprint arXiv:1910.05575}, 2019.

\bibitem[Chernozhukov et~al.(2019)Chernozhukov, W{\"u}thrich, and
  Zhu]{chernozhukov2019distributional}
Victor Chernozhukov, Kaspar W{\"u}thrich, and Yinchu Zhu.
\newblock Distributional conformal prediction.
\newblock \emph{arXiv preprint arXiv:1909.07889}, 2019.

\bibitem[Shimodaira(2000)]{shimodaira2000improving}
Hidetoshi Shimodaira.
\newblock Improving predictive inference under covariate shift by weighting the
  log-likelihood function.
\newblock \emph{Journal of statistical planning and inference}, 90\penalty0
  (2):\penalty0 227--244, 2000.

\bibitem[Sugiyama and M{\"u}ller(2005)]{sugiyama2005input}
Masashi Sugiyama and Klaus-Robert M{\"u}ller.
\newblock Input-dependent estimation of generalization error under covariate
  shift.
\newblock \emph{Statistics \& Decisions}, 23\penalty0 (4/2005):\penalty0
  249--279, 2005.

\bibitem[Sugiyama et~al.(2007)Sugiyama, Krauledat, and
  M{\~A}{\v{z}}ller]{sugiyama2007covariate}
Masashi Sugiyama, Matthias Krauledat, and Klaus-Robert M{\~A}{\v{z}}ller.
\newblock Covariate shift adaptation by importance weighted cross validation.
\newblock \emph{Journal of Machine Learning Research}, 8\penalty0
  (May):\penalty0 985--1005, 2007.

\bibitem[Quionero-Candela et~al.(2009)Quionero-Candela, Sugiyama, Schwaighofer,
  and Lawrence]{quionero2009dataset}
Joaquin Quionero-Candela, Masashi Sugiyama, Anton Schwaighofer, and Neil~D
  Lawrence.
\newblock \emph{Dataset shift in machine learning}.
\newblock The MIT Press, 2009.

\bibitem[Hechtlinger et~al.(2018)Hechtlinger, P{\'o}czos, and
  Wasserman]{hechtlinger2018cautious}
Yotam Hechtlinger, Barnab{\'a}s P{\'o}czos, and Larry Wasserman.
\newblock Cautious deep learning.
\newblock \emph{arXiv preprint arXiv:1805.09460}, 2018.

\bibitem[Guan and Tibshirani(2019)]{guan2019prediction}
Leying Guan and Rob Tibshirani.
\newblock Prediction and outlier detection in classification problems.
\newblock \emph{arXiv preprint arXiv:1905.04396}, 2019.

\bibitem[Hodge and Austin(2004)]{hodge2004survey}
Victoria Hodge and Jim Austin.
\newblock A survey of outlier detection methodologies.
\newblock \emph{Artificial intelligence review}, 22\penalty0 (2):\penalty0
  85--126, 2004.

\bibitem[Chandola et~al.(2009)Chandola, Banerjee, and
  Kumar]{chandola2009anomaly}
Varun Chandola, Arindam Banerjee, and Vipin Kumar.
\newblock Anomaly detection: A survey.
\newblock \emph{ACM computing surveys (CSUR)}, 41\penalty0 (3):\penalty0 15,
  2009.

\end{thebibliography}
\end{document}